\DeclareMathAlphabet{\pazocal}{OMS}{zplm}{m}{n}
\newtheorem{theorem}{Theorem}[section]
\newtheorem{lemma}{Lemma}[section]
\newtheorem{cor}{Corollary}[section]
\newtheorem{prop}{Proposition}[section]
\newtheorem{definition}{Definition}[section]
\newtheorem{example}{Example}[section]
\theoremstyle{remark}
\newtheorem{remark}{Remark}[section]
\numberwithin{equation}{section}
\renewcommand{\bar}{\overline}
\begin{document}

\title[Spectral properties and rigidity for self-expanding solutions]{Spectral properties and rigidity for self-expanding solutions  of  the mean curvature flows}


 \subjclass[2000]{Primary: 58C42;
Secondary: 58E30}

\thanks{The authors are partially supported by CNPq and Faperj of Brazil.}

\address{Instituto de Matem\'atica e Estat\'\i stica, Universidade Federal Fluminense,
Niter\'oi, RJ 24020, Brazil}

\author[Xu Cheng]{Xu Cheng}

\author[Detang Zhou]{Detang Zhou}


\newcommand{\M}{\mathcal M}

\begin{abstract} In this paper, we study self-expanders for mean curvature flows. First we show the discreteness of the spectrum of the drifted Laplacian on them. Next we give a universal lower bound of   the bottom  of the spectrum  of the drifted Laplacian and  prove that this lower bound is achieved if and only if the self-expander is the Euclidian subspace through the origin. Further, for self-expanders of codimension $1$, we prove  an inequality between  the bottom of the spectrum of the drifted Laplacian and the bottom of the spectrum of weighted stability operator and  that the hyperplane through the origin  is the unique self-expander where the equality holds. Also we prove the uniqueness of hyperplane through the origin for mean convex self-expanders under  some condition on the square of the norm of the second fundamental form.
\end{abstract}

\maketitle
\section{introduction}\label{introduction}

In this paper we study  self-expanding solutions  for the mean curvature flow (MCF), i.e. self-expanders. Recall that an $n$-dimensional smooth self-expander $\Sigma^n$  is a submanifold immersed in the Euclidean space $(\mathbb{R}^{n+k}, g_0)$, $k\geq 1$, satisfying 
\begin{equation}\label{intro-eq-1}
{\bf H}=\dfrac12x^{\perp},
\end{equation}
where $x$ denotes the position vector in  $\mathbb{R}^{n+k}$, $\perp$ denotes  the orthogonal projection onto the normal bundle of $\Sigma$,  and $ {\bf H}$ is the mean curvature vector of $\Sigma$ at $x$. 

In the case of codimension $1$, (\ref{intro-eq-1}) is equivalent to that the mean curvature $H$ satisfies
\begin{align}H=-\frac12\langle x, {\bf n}\rangle,
\end{align}
where ${\bf n}$ denotes the unit normal field of $\Sigma$.

Equivalently, $\Sigma$ is a self-expander if and only if $\sqrt{t}\Sigma, t\in (0,\infty)$ is a mean curvature flow.

  Self-expanders are very important in the study of MCF. They describe both the asymptotic  longtime behavior for MCF and the local structure of MCF after the singularities in the very short time.  See the works by Ecker and Huisken \cite{EH} and Stavrou \cite{S}.    Self-expanders  also arise  as the mean curvature evolution of  cones.  In Lecture 2 of \cite{I}, Ilmanen studied the existence of E-minimizing self-expanding hypersurfaces  which converge to prescribed closed cones at infinity in Euclidean space. It is known that the singular minimal cones are the singular self-expanders. Recently, Ding \cite{D} obtained some  results on minimal cones and self-expanders.  There are other works in self-expanders  (see, e.g. \cite{AIC},  \cite{FM}, \cite{S} and references therein).

  It is well known that  a self-expander $\Sigma$  is a critical point of the weighted volume functional with weighted volume element $e^{\frac{|x|^2}{4}}d\sigma$, where $d\sigma$ is the volume element of $\Sigma$. On the other hand it can be viewed as a minimal submanifold under the  conformal metric $e^{\frac{|x|^2}{2n}}g_0$ on $\mathbb{R}^{n+k}$  (see more details, e.g. \cite{CMZ3}, \cite{CMZ}).

On a self-expander, an important operator is the drifted Laplacian $\mathcal{L}=\Delta+\dfrac12\langle x,\nabla \cdot\rangle$. The reason is that $\mathcal{L}$ is a densely defined self-adjoint operator in the weighted $L^2$ space $L^2(\Sigma,  e^{\frac{|x|^2}{4}}d\sigma)$ of square integrable functions.

We discuss the spectral property of  the operator $\mathcal{L}$  and show the discreteness of  the  spectrum of $\mathcal{L}$. More precisely,
\begin{theorem} \label{th-1} For a complete $n$-dimensional properly immersed self-expander $\Sigma$ in $\mathbb{R}^{n+k}, k\geq 1$, the spectrum of the drifted Laplacian $\mathcal{L}=\Delta+\dfrac12\langle x,\nabla \cdot\rangle$ on $\Sigma$ is discrete. In particular, the bottom $\lambda_1$ of the spectrum of $\mathcal{L}$ is  the first  weighted $L^2$ eigenvalue of  $\mathcal{L}$.
\end{theorem}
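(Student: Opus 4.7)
My plan is to apply Persson's characterization of the bottom of the essential spectrum of the positive self-adjoint operator $-\mathcal{L}$ on $L^2(\Sigma, \phi\,d\sigma)$ (where $\phi = e^{|x|^2/4}$), and to show that this infimum equals $+\infty$, which forces the spectrum to be purely discrete. First I would note that proper immersion together with completeness of the ambient Euclidean space makes the induced metric on $\Sigma$ complete, so $\mathcal{L}$ is essentially self-adjoint on $C_c^\infty(\Sigma)$ in the weighted $L^2$, and the extrinsic balls $B_R \cap \Sigma$ form a compact exhaustion of $\Sigma$. Persson's formula then reads
\[
\inf \sigma_{\text{ess}}(-\mathcal{L}) \;=\; \lim_{R\to\infty}\inf\left\{\frac{\int_\Sigma |\nabla f|^2 \phi\,d\sigma}{\int_\Sigma f^2 \phi\,d\sigma}\,:\, f \in C_c^\infty(\Sigma \setminus B_R),\ f\not\equiv 0\right\},
\]
so it suffices to prove that the right-hand side tends to $+\infty$ as $R \to \infty$.

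The key ingredient I would exploit is the identity $\mathcal{L}(|x|^2) = 2n + |x|^2$, which follows from the standard submanifold formulas $\Delta_\Sigma x = \mathbf{H}$ and $|\nabla^\Sigma x|^2 = n$ together with the self-expander equation $\mathbf{H} = \tfrac12 x^\perp$: one computes $\Delta|x|^2 = 2n + 2\langle x,\mathbf{H}\rangle = 2n + |x^\perp|^2$ and $\tfrac12\langle x,\nabla|x|^2\rangle = |x^T|^2$. For $f \in C_c^\infty(\Sigma)$, multiplying by $f^2$ and integrating by parts against $\phi\,d\sigma$ via the self-adjoint structure of $\mathcal{L}$ yields
\[
\int_\Sigma f^2(2n + |x|^2)\phi\,d\sigma \;=\; -4\int_\Sigma f\,\nabla f \cdot x^T\,\phi\,d\sigma,
\]
whence Cauchy-Schwarz together with $|x^T| \leq |x|$ (and dropping the positive $2n$ term on the left) produces the weighted Hardy-type inequality
\[
\int_\Sigma f^2 |x|^2 \phi\,d\sigma \;\leq\; 16\int_\Sigma |\nabla f|^2 \phi\,d\sigma \qquad \text{for all } f \in C_c^\infty(\Sigma).
\]

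For $f$ supported in $\Sigma \setminus B_R$ the pointwise bound $|x|^2 \geq R^2$ on the support reduces this to $R^2 \int_\Sigma f^2\phi\,d\sigma \leq 16 \int_\Sigma |\nabla f|^2\phi\,d\sigma$, so the Rayleigh quotient is at least $R^2/16 \to \infty$. Persson's theorem then forces $\sigma_{\text{ess}}(-\mathcal{L}) = \emptyset$, so the spectrum is purely discrete; the ``in particular'' claim about $\lambda_1$ follows from the spectral theorem applied to a non-negative self-adjoint operator whose spectrum is discrete. The main obstacle I anticipate is the careful justification of essential self-adjointness of $\mathcal{L}$ on $C_c^\infty(\Sigma)$ and of the form of Persson's theorem in this weighted/drifted setting on a non-compact properly immersed submanifold; this is where properness plays its decisive role, and the verification should parallel what is known for self-shrinkers.
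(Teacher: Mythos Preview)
Your argument is correct, but it follows a genuinely different route from the paper's. The paper proceeds by the classical ground-state transformation: writing $f=-|x|^2/4$ and conjugating by the unitary $U:L^2(\Sigma,d\sigma)\to L^2(\Sigma,e^{-f}d\sigma)$, $Uu=ue^{f/2}$, they obtain $\mathcal{L}=UTU^{-1}$ with $T=\Delta+\tfrac12\Delta f-\tfrac14|\nabla f|^2$, a Schr\"odinger operator on the \emph{unweighted} $L^2$. A short computation using the self-expander equation yields the pointwise lower bound $\tfrac14|\nabla f|^2-\tfrac12\Delta f\ge \tfrac14|x|^2+\tfrac{n}{4}$ for the potential, and properness makes this blow up at infinity, so the Reed--Simon criterion (potential $\to+\infty$ implies compact resolvent) gives discreteness of $\sigma(T)=\sigma(\mathcal{L})$.

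Your approach stays in the weighted space and replaces the conjugation by a direct Hardy-type inequality derived from $\mathcal{L}(|x|^2)=2n+|x|^2$, then feeds the resulting Rayleigh-quotient lower bound $R^2/16$ on $\Sigma\setminus B_R$ into Persson's formula. The two methods are morally the same---Persson's theorem and the ``potential diverges'' criterion are closely related---but each has its advantages: the paper's conjugation is shorter, avoids the weighted-Persson machinery, and as a byproduct exhibits the sharp constant $n/4$ in the potential, which foreshadows the lower bound $\lambda_1\ge n/2$ of their next theorem. Your Hardy inequality is more self-contained on the weighted side and gives an explicit quantitative estimate of how the Rayleigh quotient grows outside large balls, though with a non-sharp constant. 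The technical caveats you flag (essential self-adjointness on a complete weighted manifold, Persson's theorem in the drifted setting) are indeed standard and available in the literature, so they are not real obstacles.
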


The discreteness of the spectrum of $\mathcal{L}$ means that the embeding of the Sobolev space $W^{1,2}_0(\Sigma,  e^{\frac{|x|^2}{4}}d\sigma)$ into $L^2(\Sigma,  e^{\frac{|x|^2}{4}}d\sigma)$ is compact.  It is  worthy of  mentioning  that self-shrinkers for MCF also have the same property. In \cite{CZ13-2},  we showed that for a complete $n$-dimensional properly immersed self-shrinker  in $\mathbb{R}^{n+k}$, the  spectrum of  its  corresponding drifted Laplacian $\pazocal{L}=\Delta-\dfrac12\langle x,\nabla \cdot\rangle$ is discrete.  

For a complete properly immersed self-shrinker, $0$ is obviously the bottom of the spectrum of the operator $\pazocal{L}$ since the properness, polynomial volume growth, and the weighted volume are equivalent  for a self-shrinker  by the results  by Ding and Xin \cite{DX},   and  the authors \cite{CZ}.  For self-expanders, the situation is different. For instance, $\mathbb{R}^n$ has the infinite weighted volume.   We estimate the lower-bound of  the bottom $\lambda_1$ of the spectrum  and use the weighted $L^2$-integrability of the bottom $\lambda_1$ of the spectrum of $\mathcal{L}$ (Theorem \ref{th-1}) to discuss the  rigidity of    the lower bound.  More precisely, we prove that
\begin{theorem}\label{th-2} For a complete $n$-dimensional properly immersed self-expander $\Sigma$  in $\mathbb{R}^{n+k}, k\geq 1$,  the bottom of the spectrum of the drifted Laplacian $\mathcal{L}=\Delta+\dfrac12\langle x,\nabla \cdot\rangle$, i.e. the first weighted $L^2$ eigenvalue of $\mathcal{L}$, satisfies 
\begin{equation}\label{estimate-lambda-1}\lambda_1\geq \dfrac{n}{2}.
\end{equation}
The equality in (\ref{estimate-lambda-1}) holds if and only if $\Sigma$ is the Euclidian subspace $\mathbb{R}^n$ through the origin.
\end{theorem}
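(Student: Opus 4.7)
The plan is a Barta-style comparison built on the positive function $\varphi := e^{-|x|^{2}/4}$, combined with the discreteness afforded by Theorem~\ref{th-1}. I would first compute $\mathcal{L}\varphi$ directly on $\Sigma$. Using $\Delta_\Sigma |x|^{2} = 2n + 2\langle x,\mathbf{H}\rangle = 2n + |x^\perp|^{2}$ (since $\mathbf{H} = x^\perp/2$ on a self-expander) together with $\tfrac{1}{2}\langle x,\nabla_\Sigma\varphi\rangle = -\tfrac{1}{4}|x^{T}|^{2}\varphi$, the $|x^{T}|^{2}$-terms should cancel and yield the key identity
\[
\mathcal{L}\varphi \;=\; -\Bigl(\frac{n}{2} + \frac{|x^\perp|^{2}}{4}\Bigr)\varphi.
\]
In particular, on the flat $\mathbb{R}^{n}$ through the origin (where $x^\perp\equiv 0$) the function $\varphi$ is a positive weighted-$L^{2}$ eigenfunction of eigenvalue exactly $n/2$, which confirms that (\ref{estimate-lambda-1}) is sharp and pinpoints the extremizer.

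Second, for $f\in C_c^\infty(\Sigma)$ I would write $f = \varphi w$ with $w := f\,e^{|x|^{2}/4}\in C_c^\infty(\Sigma)$, expand $|\nabla f|^{2}$, and integrate the cross term $\langle \nabla w^{2},\nabla\varphi\rangle$ by parts (unweighted, using the compact support of $w$). Reassembling via the key identity should give
\[
\int_\Sigma |\nabla f|^{2}\,e^{|x|^{2}/4}\,d\sigma
\;=\;
\int_\Sigma \varphi\,|\nabla w|^{2}\,d\sigma
\;+\;
\int_\Sigma \Bigl(\frac{n}{2} + \frac{|x^\perp|^{2}}{4}\Bigr) f^{2}\,e^{|x|^{2}/4}\,d\sigma.
\]
Both terms on the right are nonnegative, and the second dominates $\tfrac{n}{2}\int f^{2}e^{|x|^{2}/4}\,d\sigma$. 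Taking the infimum of the Rayleigh quotient over $C_c^\infty(\Sigma)$, which equals $\lambda_1$, then gives (\ref{estimate-lambda-1}) at once.

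For the rigidity statement I would invoke Theorem~\ref{th-1}: when $\lambda_1 = n/2$ the infimum is attained by a weighted $L^{2}$-eigenfunction $u$, which may be taken strictly positive (minimize over $|u|$ and apply the strong maximum principle). Approximating $u$ by compactly supported functions in the weighted $W^{1,2}$-norm and passing to the limit in the identity above (Fatou on the two nonnegative right-hand terms, convergence on the left to $\tfrac{n}{2}\int u^{2}e^{|x|^{2}/4}d\sigma$) should force both $\int_\Sigma|x^\perp|^{2}u^{2}e^{|x|^{2}/4}d\sigma = 0$ and $\nabla(u/\varphi)\equiv 0$. The former gives $x^\perp\equiv 0$ on $\Sigma$ (since $u>0$), and the latter gives $u = c\,\varphi$ for a positive constant $c$. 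Finally, $x^\perp\equiv 0$ means the position vector is everywhere tangent to $\Sigma$, so its integral curves (rays from the origin) lie in $\Sigma$; proper immersion then forces $0\in\Sigma$, and smoothness at the origin together with dilation invariance forces $\Sigma$ to coincide with $T_0\Sigma\cong\mathbb{R}^{n}$.

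The step I expect to require the most care is the density/cutoff argument needed to push the integral identity from $C_c^\infty(\Sigma)$ to the eigenfunction $u$; in particular, the integrability of $|x^\perp|^{2}u^{2}e^{|x|^{2}/4}$ is not a priori obvious from $u\in L^{2}(e^{|x|^{2}/4}d\sigma)$ alone, and must be extracted from the identity applied to truncations together with the finiteness of $\int|\nabla u|^{2}e^{|x|^{2}/4}d\sigma$. The rest is a direct calculation plus a standard smooth-cone-through-the-origin rigidity argument.
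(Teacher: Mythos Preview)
Your proposal is correct and follows essentially the same path as the paper: both hinge on the identity $\mathcal{L}(e^{-|x|^{2}/4}) = -(\tfrac{n}{2}+|\mathbf{H}|^{2})\,e^{-|x|^{2}/4}$ (your $|x^{\perp}|^{2}/4$ is exactly the paper's $|\mathbf{H}|^{2}$), deduce $\lambda_1\ge n/2$ by a Barta-type argument, and in the equality case plug cutoffs of the first $L^{2}$-eigenfunction (supplied by Theorem~\ref{th-1}) into the resulting integral inequality to force $\mathbf{H}\equiv 0$, whence $\Sigma$ is a smooth minimal cone and therefore $\mathbb{R}^{n}$ through the origin. The only cosmetic difference is that you use the ground-state substitution $f=\varphi w$ to produce an exact integral identity, whereas the paper uses the equivalent $\log$-trick $\mathcal{L}\log\varphi = -(\tfrac{n}{2}+|\mathbf{H}|^{2}) - |\nabla\log\varphi|^{2}$ together with Cauchy--Schwarz; your version has the minor bonus of also yielding $u=c\,\varphi$ in the equality case, which the paper does not extract but also does not need.
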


Further in the case of self-expander  hypersurfaces, using  Theorem \ref{th-1} and a Simons' type equation for the mean curvature $H$, we may obtain that
\begin{theorem} \label{th-3-hyper} If $\Sigma$ is a complete properly immersed self-expander hypersurface in $\mathbb{R}^{n+1}$, then the bottom $\lambda_1$ of the spectrum of the drifted Laplacian $\mathcal{L}=\Delta-\left<\nabla f, \nabla\cdot\right>$ on $\Sigma$, i.e. the first weighted $L^2$ eigenvalue of $\mathcal{L}$ satisfies
 \begin{equation}\label{3-eq-thm2-1-2}
 \lambda_1\geq \dfrac{n}{2}+\inf_{x\in \Sigma}H^2,
 \end{equation}
where $H$ is the mean curvature of $\Sigma$. The equality in (\ref{3-eq-thm2-1-2}) holds if and only if $\Sigma$ is the hyperplane $\mathbb{R}^n$ through the origin.
\end{theorem}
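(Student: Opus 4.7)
The plan is to package everything into one integration-by-parts identity obtained by completing the square, to read off the spectral lower bound from the non-negativity of that square, and then to use the Simons-type equation for $H$ together with the equality case of the identity to force $\Sigma$ to be a flat hyperplane through the origin.

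First I would establish the following key identity: for every $u\in C_0^\infty(\Sigma)$,
\begin{equation*}
\int_\Sigma|\nabla u|^2 e^{|x|^2/4}\,d\sigma \;=\; \int_\Sigma\Bigl(\tfrac{n}{2}+H^2\Bigr)u^2 e^{|x|^2/4}\,d\sigma \;+\; \int_\Sigma\bigl|\nabla u+\tfrac{1}{2}u\,x^T\bigr|^2 e^{|x|^2/4}\,d\sigma.
\end{equation*}
The derivation is a direct integration by parts against the weight $e^{|x|^2/4}$ after expanding the squared norm on the right. The only geometric inputs are $\nabla(|x|^2/4)=\tfrac{1}{2}x^T$ and the self-expander consequence $\operatorname{div}_\Sigma(x^T)=n+\langle\mathbf H,x\rangle=n+2H^2$, which follows from Beltrami's formula $\Delta x=\mathbf H$ together with $\mathbf H=\tfrac{1}{2}x^\perp$. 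The $|x^T|^2$-contributions from the cross-term and the squared-magnitude term cancel exactly, and only $\tfrac{n}{2}+H^2$ survives. Since the last integral is non-negative and $H^2\ge\inf_\Sigma H^2$ pointwise, the variational characterization of $\lambda_1$ (legitimized by Theorem \ref{th-1}, which identifies $\lambda_1$ with the first weighted $L^2$ eigenvalue of $\mathcal L$) immediately yields $\lambda_1\ge\tfrac{n}{2}+\inf H^2$.

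For the equality case I would let $u_1>0$ be the first weighted $L^2$ eigenfunction furnished by Theorem \ref{th-1} (positivity follows from the strong maximum principle applied to $\mathcal L u_1=-\lambda_1 u_1$) and substitute $u_1$ into the identity. Both non-negative integrals on the right must then vanish: the first forces $H^2\equiv\inf_\Sigma H^2$ on $\Sigma$, so $H^2$ is constant, while the second forces $\nabla u_1=-\tfrac{1}{2}u_1\,x^T$, so $u_1=Ce^{-|x|^2/4}$ for some $C>0$. At this point the Simons-type identity
\begin{equation*}
\mathcal L H=-\Bigl(\tfrac{1}{2}+|A|^2\Bigr)H,
\end{equation*}
which follows from $H=-\tfrac{1}{2}\langle x,\mathbf n\rangle$ and the Codazzi identity by a direct computation, enters. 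Since $H^2$ is constant and $\Sigma$ is connected, $H$ itself is constant, so $\mathcal L H\equiv 0$; because $\tfrac{1}{2}+|A|^2>0$ everywhere, this forces $H\equiv 0$. Combined with the expander equation, $H\equiv 0$ gives $x^\perp\equiv 0$, i.e., the position vector is everywhere tangent to $\Sigma$; equivalently, $\Sigma$ is invariant under positive dilations. Properness forces $0$ to lie in (the image of) $\Sigma$ by letting the dilation parameter go to $0$, and smoothness at the origin forces $\Sigma$ to coincide with its tangent hyperplane there, namely the linear $\mathbb R^n$.

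I expect the main technical obstacle to be the clean derivation of the Simons-type identity $\mathcal L H=-(\tfrac{1}{2}+|A|^2)H$: the computation is routine but requires careful bookkeeping of the paper's sign conventions for $\mathbf n$ and $A$ relative to the normalizations $\mathbf H=\tfrac{1}{2}x^\perp$ and $H=-\tfrac{1}{2}\langle x,\mathbf n\rangle$. A secondary technical point is extending the completion-of-squares identity from $C_0^\infty$ to the eigenfunction $u_1$, which is not a priori compactly supported; this is handled by a standard cutoff argument using the rapid weighted $L^2$ decay guaranteed by Theorem \ref{th-1}.
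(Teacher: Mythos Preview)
Your proof is correct and follows essentially the same route as the paper: both arguments rest on the fact that $v=e^{-|x|^2/4}$ is a positive solution of $\mathcal{L}v+(\tfrac{n}{2}+H^2)v=0$, then invoke the Simons-type equation $\mathcal{L}H+(|A|^2+\tfrac12)H=0$ in the equality case to force $H\equiv 0$. Your completion-of-squares identity is exactly the ground-state substitution $u=v\phi$ written out explicitly (note that $\nabla u+\tfrac12 u\,x^T=v\nabla\phi$), whereas the paper presents the same computation via $\mathcal{L}\log v$ and integration by parts; your packaging is slightly more direct and additionally identifies the first eigenfunction as $Ce^{-|x|^2/4}$ in the equality case, but the mathematical content is the same.
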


Observe that Theorem \ref{th-2} can be implied by Theorem \ref{th-3-hyper} when $\Sigma$ is a hypersurface.

A hypersurface is called mean convex if its mean curvature  is non-negative.
In \cite{CZ}, Colding and Minicozzi proved a complete embedded mean-convex self-shrinker hypersurface in $\mathbb{R}^{n+1}$ with polynomial volume growth must be cylinders $\mathbb{S}^{m}\times \mathbb{R}^{n-m}, 0\leq m\leq n$. For translating  solutions of MCF,  i.e. translators $H=-\langle w,{\bf n}\rangle$, where  $w$ is the constant vector in $\mathbb{R}^{n+1}$, in  \cite{TZ}, Tasayco and the second author in the present paper proved that an immersed nonflat translator $\Sigma^n$ in $ R^{n+1}, n = 2, 3$ is a grim hyperplane if and only if it is mean convex  and  there exists  a constant $C > 0$  such that
$\int_{B_R}|A|^2e^{\langle x, w\rangle}d\sigma\leq CR^2,
$
for all $R$ sufficiently large, where $B_R$ denotes the geodesic ball of $\Sigma$.  Their result is sharp in the sense that grim hyperplanes  for $n\geq 4$ do not satisfy the growth condition on the integral of $|A|$ given in the above. For self-expanders, it is known that there is no any compact one and  there are nonflat examples of mean-convex self-expanders (see, for instance, \cite{AIC}, \cite{EH}). Very recently, Fong and McGrath \cite{FM} showed that mean-convex self-expanders which are asymptotic to $O(n)$-invariant cones are rotationally symmetric. On mean convex self-expanders, by using Lemma 1 in \cite{TZ} and Simons' type equations,  we prove the following result:
\begin{theorem}\label{th-3} Let $\Sigma$ be a complete $n$-dimensional, $n\geq 2$,  properly immersed mean convex self-expander in $\mathbb{R}^{n+1}$  and let $B_R$ denote the geodesic ball of $\Sigma$  of radius $R$ centered in a fixed point in $\Sigma$.
 Suppose that  $h(t)$ is a positive function on $[\delta, \infty)$, for some $\delta>0$, satisfying $\frac t{h(t)}$ is nonincreasing on $[\delta,\infty)$ and 
\begin{align}\int_{\delta}^{\infty}\frac{t}{h(t)}dt<\infty.
\end{align}
If  there exists  a constant $C > 0$ such that the square $|A|^2$ of the norm of the second fundamental form of $\Sigma$  satisfies
\begin{equation}\label{eq-A-In}\int_{B_R}|A|^2e^{\frac{|x|^2}{4}}d\sigma<Ch(R),
\end{equation}
for all $R$ sufficiently large, 
 then $\Sigma$ must be a hyperplane $\mathbb{R}^n$ through the origin.

\end{theorem}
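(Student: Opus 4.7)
The plan is to combine a Simons-type identity for the mean curvature with a weighted cutoff argument in the spirit of \cite{TZ}. Starting from the codimension-one self-expander equation $H=-\tfrac{1}{2}\langle x,\mathbf{n}\rangle$, a direct computation (using the Codazzi identity together with the Hessian formula $\nabla_k x^T_j = \delta_{kj}-2Hh_{kj}$ on $\Sigma$) yields the expander analogue of Colding--Minicozzi's Simons-type identity
\[
\mathcal{L}H + \bigl(|A|^2 + \tfrac{1}{2}\bigr)H = 0.
\]
Under the mean convexity assumption $H\geq 0$, this reads $\mathcal{L}H\leq 0$, so the strong maximum principle applied on the connected manifold $\Sigma$ forces the dichotomy: either $H\equiv 0$ on $\Sigma$, or $H>0$ everywhere.

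If $H\equiv 0$, the self-expander equation gives $x^\perp\equiv 0$, so $\Sigma$ is invariant under dilations, i.e.\ a smooth complete cone through the origin; smoothness at the apex then forces $\Sigma$ to be a hyperplane through the origin, which is the desired conclusion. It therefore suffices to rule out the case $H>0$. I would set $\phi:=\log H$; dividing the Simons identity by $H$ produces
\[
\mathcal{L}\phi + |\nabla\phi|^2 + |A|^2 + \tfrac{1}{2} = 0.
\]
For any nonnegative Lipschitz cutoff $\eta$ with compact support, I would multiply by $\eta^2$, integrate against $e^{|x|^2/4}d\sigma$, use self-adjointness of $\mathcal{L}$ in this weighted $L^2$, and absorb the cross term via $2\eta\langle\nabla\eta,\nabla\phi\rangle\leq \eta^2|\nabla\phi|^2+|\nabla\eta|^2$. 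The outcome is the weighted Caccioppoli-type inequality
\[
\int_\Sigma \eta^2\bigl(|A|^2+\tfrac{1}{2}\bigr)e^{|x|^2/4}d\sigma \;\leq\; \int_\Sigma|\nabla\eta|^2 e^{|x|^2/4}d\sigma.
\]

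The final step is to build a sequence of intrinsic radial cutoffs $\eta_R\nearrow 1$ for which the right-hand side tends to zero; this is precisely where Lemma 1 of \cite{TZ} is applied. The monotonicity of $t/h(t)$ together with $\int_\delta^\infty t/h(t)\,dt<\infty$ is the analytic input used to produce such a cutoff, and the growth bound $\int_{B_R}|A|^2 e^{|x|^2/4}d\sigma\leq Ch(R)$ provides the weighted integrability needed to control its Dirichlet energy along the annular region where $\eta_R$ transitions from $1$ to $0$. Passing to the limit then forces $\int_\Sigma(|A|^2+\tfrac{1}{2})e^{|x|^2/4}d\sigma\leq 0$, which is absurd; hence $H\equiv 0$ and the cone argument of the previous paragraph concludes the proof.

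The principal obstacle lies in this last cutoff step. In \cite{TZ}, Lemma~1 was calibrated for translators with weight $e^{\langle x,w\rangle}$ and the quadratic growth $h(R)=CR^2$; here the weight is the Gaussian-type $e^{|x|^2/4}$ and $h$ is allowed to be any positive function with $t/h(t)$ nonincreasing and integrable. I expect the adaptation to be essentially mechanical, with the cutoff chosen as a radial function $\eta_R(r)$ equal to $1$ for $r\leq R$ and decaying on $[R,\infty)$ in accordance with $t/h(t)$; the technical heart is then verifying line by line that the monotonicity of $t/h(t)$ suffices to absorb the exponential weight along the annular transition region, so that $\int_\Sigma |\nabla\eta_R|^2 e^{|x|^2/4}d\sigma$ really does tend to zero under the hypothesized growth of $\int_{B_R}|A|^2 e^{|x|^2/4}d\sigma$.
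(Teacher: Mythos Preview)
Your approach has a genuine gap at the final cutoff step, and it is not merely a technical adaptation of \cite{TZ}. The Caccioppoli inequality you derive is correct, but its right-hand side $\int_\Sigma |\nabla\eta|^2 e^{|x|^2/4}\,d\sigma$ is governed by the \emph{bare} weighted volume of the annulus where $\eta$ transitions, and has nothing to do with $|A|^2$. The hypothesis \eqref{eq-A-In} controls $\int_{B_R}|A|^2 e^{|x|^2/4}\,d\sigma$, not $\int_{B_R} e^{|x|^2/4}\,d\sigma$, and for self-expanders the latter is hopeless: already the hyperplane has $\int_{B_R}e^{|x|^2/4}\,d\sigma$ of order $e^{R^2/4}$, so no intrinsic radial cutoff can drive $\int|\nabla\eta_R|^2 e^{|x|^2/4}\,d\sigma$ to zero. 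Indeed, if your argument went through it would force $|A|\equiv 0$ on \emph{every} properly immersed self-expander with $H>0$, with no hypothesis on $|A|$ whatsoever --- contradicting the existence of nontrivial mean-convex examples. You have also misread Lemma~1 of \cite{TZ}: it is not a cutoff-construction device but a Liouville-type comparison principle, taking as input a positive solution $u$ and a nonnegative supersolution $v$ of the same linear equation, together with a growth bound on $\int_{B_R}v^2 e^{-f}$, and returning $v=cu$.

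The paper's proof supplies the missing ingredient by invoking the second Simons-type equation, the one for $|A|$. From $\tfrac12\mathcal{L}|A|^2=|\nabla A|^2-\tfrac12|A|^2-|A|^4$ and Kato's inequality one gets $\mathcal{L}|A|+(|A|^2+\tfrac12)|A|\geq 0$. Thus $u=H>0$ is a positive solution and $v=|A|\geq 0$ a nonnegative supersolution of $\mathcal{L}w+(|A|^2+\tfrac12)w=0$, and the growth hypothesis required by Lemma~1 of \cite{TZ} becomes precisely $\int_{B_R}|A|^2 e^{|x|^2/4}\,d\sigma\leq Ch(R)$, i.e.\ \eqref{eq-A-In}. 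The lemma yields $|A|=cH$, hence equality in Kato, $|\nabla A|=|\nabla|A||$. Huisken's rank dichotomy then finishes the argument: rank $\geq 2$ forces $\nabla A\equiv 0$, so $|A|$ is a positive constant, contradicting $\mathcal{L}|A|+(|A|^2+\tfrac12)|A|=0$; rank $1$ makes $\Sigma$ split as a curve in $\mathbb{R}^2$ times $\mathbb{R}^{n-1}$, and the exponential weight on the flat factor violates \eqref{eq-A-In}.
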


Note that some examples of $\kappa(t)$ in Theorem \ref{th-3} are $\kappa(t)= t^2$,  $t^2\log t$, $t^2(\log t)(\log\log t)$, and so on.
For instance, take $h(t)=t^2$, Theorem \ref{th-3}  implies the following
\begin{cor}\label{cor-mean} Let $\Sigma$ be a complete $n$-dimensional, $n\geq 2$,  properly immersed mean convex self-expanderin $\mathbb{R}^{n+1}$  and let $B_R$ denote the geodesic ball of $\Sigma$  of radius $R$ centered in a fixed point in $\Sigma$. If  there exists  a constant $C > 0$ such that the square $|A|^2$ of the norm of the second fundamental form of $\Sigma$  satisfies
\begin{equation}\label{eq-mean-1}\int_{B_R}|A|^2e^{\frac{|x|^2}{4}}d\sigma<CR^2,
\end{equation}
for all $R$ sufficiently large, 
 then $\Sigma$ must be a hyperplane $\mathbb{R}^n$ through the origin.
\end{cor}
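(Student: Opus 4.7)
The corollary is stated as the special case $h(t)=t^2$ of Theorem \ref{th-3}, so formally one may simply invoke that result. It is more illuminating to sketch a direct proof in this setting, since it exhibits the main mechanism behind Theorem \ref{th-3}. The plan has four ingredients: a Simons-type identity for $H$; a strong-maximum-principle dichotomy; a cone argument for the case $H\equiv 0$; and a stability-type integral inequality combined with the logarithmic cutoff of Lemma 1 in \cite{TZ} to rule out $H>0$.

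For the Simons identity, using $H=-\tfrac12\langle x,{\bf n}\rangle$ together with the Codazzi equation and the standard Hessian computation for $x^T$ on a hypersurface, a direct calculation yields
\[
\mathcal{L} H \;=\; -\Bigl(\tfrac12+|A|^2\Bigr)H,
\]
where $\mathcal{L}=\Delta+\tfrac12\langle x,\nabla\cdot\rangle$ is self-adjoint with respect to $e^{|x|^2/4}\,d\sigma$. Mean convexity $H\geq 0$ then gives $\mathcal{L}H\leq 0$, so by the strong maximum principle either $H\equiv 0$ on $\Sigma$ or $H>0$ everywhere. If $H\equiv 0$ then $\langle x,{\bf n}\rangle\equiv 0$, so the position vector is tangent to $\Sigma$ at every point; this implies $\Sigma$ is a smooth cone through the origin, hence (by smoothness at the apex) a hyperplane through the origin.

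To rule out $H>0$, I would set $u=\log H$, so that
\[
\mathcal{L} u \;=\; \frac{\mathcal{L}H}{H}-|\nabla u|^2 \;=\; -\Bigl(\tfrac12+|A|^2\Bigr)-|\nabla u|^2.
\]
Multiplying by a compactly supported Lipschitz cutoff $\phi^2$ and integrating against $e^{|x|^2/4}\,d\sigma$, using self-adjointness of $\mathcal{L}$ and Cauchy--Schwarz on the resulting cross term to absorb the $|\nabla u|^2$ contribution, one arrives at the stability-type inequality
\[
\int_\Sigma \phi^2\Bigl(\tfrac12+|A|^2\Bigr)e^{|x|^2/4}\,d\sigma \;\leq\; \int_\Sigma |\nabla\phi|^2\,e^{|x|^2/4}\,d\sigma.
\]
I would then invoke Lemma~1 of \cite{TZ}, with the weight $e^{\langle x,w\rangle}$ of the translator setting replaced by $e^{|x|^2/4}$: under the growth hypothesis $\int_{B_R}|A|^2 e^{|x|^2/4}\,d\sigma\leq CR^2$, this lemma provides a sequence of cutoffs $\phi_R\uparrow 1$ on $\Sigma$ with $\int_\Sigma |\nabla\phi_R|^2 e^{|x|^2/4}\,d\sigma\to 0$, forcing $\int_\Sigma\tfrac12\,e^{|x|^2/4}\,d\sigma\leq 0$, the desired contradiction.

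The main obstacle is the cutoff construction in the last step. Since $\mathbb{R}^n$ is itself a self-expander (with $H\equiv 0$ and infinite weighted volume), no cutoff built from geodesic balls alone can kill the right-hand side in general; the proof must genuinely use the bound on $\int|A|^2 e^{|x|^2/4}$ to calibrate a logarithmic cutoff in the spirit of \cite{TZ}. The Simons identity is favorable here in that it produces a positive potential $\tfrac12+|A|^2$ on the left-hand side, which pairs cleanly with the weighted $|A|^2$-integral appearing in the hypothesis.
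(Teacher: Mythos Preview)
Your reduction to Theorem~\ref{th-3} with $h(t)=t^2$ is correct, and the first three ingredients (Simons identity for $H$, the strong-maximum-principle dichotomy, and the cone argument for $H\equiv 0$) are exactly what the paper uses. The gap is in the final step, where you try to close the argument using only the stability inequality coming from $\log H$.

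The inequality
\[
\int_\Sigma \phi^2\bigl(\tfrac12+|A|^2\bigr)e^{|x|^2/4}\,d\sigma \;\leq\; \int_\Sigma |\nabla\phi|^2\,e^{|x|^2/4}\,d\sigma
\]
is correct, but the right-hand side involves the \emph{weighted volume} of the annulus supporting $\nabla\phi$, and the hypothesis controls only $\int_{B_R}|A|^2 e^{|x|^2/4}$, not $\int_{B_R} e^{|x|^2/4}$. No cutoff sequence can make $\int|\nabla\phi_R|^2 e^{|x|^2/4}\to 0$ unless the weighted $f$-capacity of $\Sigma$ vanishes; as you yourself note, already $\mathbb{R}^n$ shows this fails. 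More importantly, Lemma~1 of \cite{TZ} is not a cutoff-construction lemma: it is a Liouville-type comparison principle. It takes \emph{two} functions, a positive supersolution $u$ of $\Delta_f+q$ and a nonnegative subsolution $v$, together with a growth bound on $\int_{B_R} v^2 e^{-f}$, and concludes $v=Cu$. Your scheme supplies only the supersolution $u=H$; there is no subsolution $v$ whose weighted $L^2$ growth is the hypothesis \eqref{eq-mean-1}.

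The missing idea, and what the paper does, is to bring in the Simons equation for $|A|$. From
\[
\tfrac12\,\mathcal{L}|A|^2=|\nabla A|^2-\tfrac12|A|^2-|A|^4
\]
and Kato's inequality $|\nabla A|\ge |\nabla|A||$, one gets (where $|A|>0$)
\[
\mathcal{L}|A|+\bigl(|A|^2+\tfrac12\bigr)|A|\;\ge\;0.
\]
Now Lemma~1 of \cite{TZ} applies with $u=H>0$, $v=|A|\ge 0$, $q=|A|^2+\tfrac12$, and the growth condition $\int_{B_R} v^2 e^{-f}=\int_{B_R}|A|^2 e^{|x|^2/4}\le CR^2$ is exactly the hypothesis \eqref{eq-mean-1}. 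The conclusion is $|A|=C H$, whence equality in Kato, i.e.\ $|\nabla A|=|\nabla|A||$. From here Huisken's rank argument finishes: either $\nabla A\equiv 0$, forcing $|A|$ constant and contradicting $\mathcal{L}|A|+(|A|^2+\tfrac12)|A|=0$ with $|A|>0$, or $\Sigma$ splits as a curve times $\mathbb{R}^{n-1}$, which violates \eqref{eq-mean-1} because of the $e^{|x|^2/4}$ weight on the Euclidean factor. Either way one is forced back to $H\equiv 0$.
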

With  the weight $e^{\frac{|x|^2}{4}}$,  inequality (\ref{eq-mean-1}) is a kind of asymptotic flatness condition. 

In the final part of this paper, we study  the $L$-stability operator for self-expanders, which  is the Sch$\ddot{\text{o}}$rdinger operator
\begin{equation} \label{L} L=\mathcal{L}+|A|^2-\frac12=\Delta+\frac12\left<x, \nabla\cdot\right>+|A|^2-\frac12.
\end{equation}

$L$-stability (see Section \ref{notation})  means that  the second variation of its weighted volume is nonnegative for any compactly supported normal variation (see, e.g., \cite{CMZ3}, \cite{D}). Recall that there is no  weighted-stable self-shrinkers with polynomial volume growth (\cite{CM1}).  Unlike self-shrinkers, the $L$-stability is not a rigid property for self-expanders. The self-expander graphs are weighted-volume minimizing and hence $L$-stable. Also the mean convex self-expanders are $L$--stable (see Theorem \ref{th-stab}). In spite of this, we may still study the bottom $\mu_1$ of the spectrum of  the operator $L$. Since a self-expander is noncompact, $\mu_1$ may  not be the lowest weighted $L^2$-eigenvalue for $L$ and also $\mu_1$ may take $-\infty$.
  We give an upper bound inequality for $\mu_1$ compared to the result of Colding-Minicozzi for self-shrinkers  (Theorem 9.2, \cite{CM}) .   More precisely, we obtain the following result:

\begin{theorem} \label{th-stab} Let $\Sigma$ be a complete $n$-dimensional properly immersed self-expander hypersurface. Then the bottom $\mu_1$ of the spectrum of the $L$-stability operator $L$ and the bottom  $\lambda_1$ of the spectrum of the drifted Laplacian $\mathcal{L}$ satisfy
\begin{align}\label{stab-eq-1}\mu_1\leq \lambda_1+\frac12.
\end{align}
The equality holds if and only if $\Sigma$ is a hyperplane through the origin.

Further, if $\Sigma$ is mean convex, then 
\begin{align}\label{stab-eq-2}1\leq \mu_1\leq \lambda_1+\frac12.
\end{align}

\end{theorem}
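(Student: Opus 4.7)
The plan is to combine the Rayleigh quotient characterizations of $\mu_1$ and $\lambda_1$ with a Simons-type identity for the mean curvature of a self-expander.

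For the upper bound $\mu_1\le\lambda_1+\tfrac12$, self-adjointness of $L$ on $L^2(\Sigma,e^{|x|^2/4}d\sigma)$ gives
\[
\mu_1 \;=\; \inf_{0\ne u\in C_c^\infty(\Sigma)}\; \frac{\int_\Sigma(|\nabla u|^2-|A|^2u^2+\tfrac12u^2)e^{|x|^2/4}d\sigma}{\int_\Sigma u^2\,e^{|x|^2/4}d\sigma},
\]
and dropping the nonnegative term $|A|^2u^2$ yields $\mu_1-\tfrac12\le\lambda_1$ at once. For the equality case, Theorem~\ref{th-1} provides a positive first $\mathcal{L}$-eigenfunction $\psi$; testing the Rayleigh quotient of $L$ on a sequence $\eta_k\psi$ with cutoffs $\eta_k\in C_c^\infty$, $\eta_k\nearrow 1$, and passing to the limit gives
\[
\mu_1 \;\le\; \lambda_1+\tfrac12-\frac{\int_\Sigma|A|^2\psi^2 e^{|x|^2/4}d\sigma}{\int_\Sigma\psi^2 e^{|x|^2/4}d\sigma},
\]
so equality in (\ref{stab-eq-1}) forces $|A|\equiv 0$. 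Thus $\Sigma$ is totally geodesic, hence an affine hyperplane, and the self-expander condition $H=-\tfrac12\langle x,\mathbf n\rangle=0$ places the origin on it. Conversely, on such a hyperplane $L=\mathcal{L}-\tfrac12$, so $\mu_1=\lambda_1+\tfrac12$ by comparison of the spectra.

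For the mean convex lower bound, the key ingredient is the Simons-type identity
\[
\mathcal{L}H \;=\; -\bigl(|A|^2+\tfrac12\bigr)H,\qquad\text{equivalently}\ LH=-H,
\]
which I would derive by applying $\mathcal{L}$ to $H=-\tfrac12\langle x,\mathbf n\rangle$ and simplifying via the Codazzi equation. Since $H\ge 0$ and $|A|^2+\tfrac12>0$, the strong maximum principle then gives the dichotomy $H\equiv 0$ or $H>0$; in the former case $\Sigma$ is a linear hyperplane through the origin, for which Theorem~\ref{th-2} gives $\mu_1=\lambda_1+\tfrac12\ge\tfrac{n+1}{2}\ge 1$. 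In the remaining case $H>0$, I would apply a Fischer--Colbrie / logarithmic-derivative argument with $w=\log H$, which turns the identity above into $\mathcal{L}w+|\nabla w|^2=-|A|^2-\tfrac12$. Multiplying by $\phi^2$ for $\phi\in C_c^\infty(\Sigma)$, integrating against $e^{|x|^2/4}d\sigma$, converting $\int\phi^2\mathcal{L}w\,e^{|x|^2/4}d\sigma=-2\int\phi\langle\nabla\phi,\nabla w\rangle e^{|x|^2/4}d\sigma$ by the weighted divergence theorem, and applying $2\phi\langle\nabla\phi,\nabla w\rangle\le|\nabla\phi|^2+\phi^2|\nabla w|^2$ leaves, after cancellation of the $|\nabla w|^2$ terms,
\[
\int_\Sigma(|\nabla\phi|^2-|A|^2\phi^2+\tfrac12\phi^2)e^{|x|^2/4}d\sigma \;\ge\; \int_\Sigma\phi^2\,e^{|x|^2/4}d\sigma,
\]
which is exactly $\mu_1\ge 1$.

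The main obstacle I anticipate is the cutoff/limit procedure that feeds the non-compactly supported eigenfunction $\psi$ into the quadratic form of $L$ in the equality case, since one needs sufficient integrability of $|A|\psi$; this should be handled using the discreteness of the spectrum from Theorem~\ref{th-1} together with standard density arguments in the weighted form domain.
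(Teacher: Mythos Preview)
Your argument is correct and close in spirit to the paper's, but the route to the upper bound $\mu_1\le\lambda_1+\tfrac12$ differs. The paper does not drop the $|A|^2$ term in the Rayleigh quotient directly; instead it assumes $\mu_1>-\infty$, takes a positive function $w$ solving $Lw+\mu_1 w=0$, applies the log trick to $w$ to obtain
\[
\int_\Sigma |A|^2\phi^2 e^{|x|^2/4}d\sigma \;\le\; \int_\Sigma\Bigl(\tfrac12-\mu_1\Bigr)\phi^2 e^{|x|^2/4}d\sigma+\int_\Sigma|\nabla\phi|^2 e^{|x|^2/4}d\sigma
\]
for all $\phi\in C_c^\infty(\Sigma)$, and then substitutes $\phi=\varphi_j u$ with $u$ the first $\mathcal{L}$-eigenfunction and passes to the limit. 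This single step yields both the inequality and, since the left side is nonnegative, the rigidity $|A|\equiv 0$ when equality holds. Your Rayleigh-quotient comparison is more elementary for the bare inequality (no case split on $\mu_1=-\infty$, no appeal to the existence of $w$), but you then need the same cutoff argument with the $\mathcal{L}$-eigenfunction anyway to extract the equality case; the paper's organization packages both into one computation. For the mean convex bound, the paper simply quotes the Fischer--Colbrie--Schoen principle that a positive solution of $LH+H=0$ forces $\mu_1\ge 1$, whereas you write out that same log-trick computation explicitly; these are identical in content.
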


The rest of this paper is organized as follows: In Section 2 some definitions and notations are given,  in Section 3 we prove the results on the spectrum and bottom of the spectrum of the drifted Laplacian $\mathcal{L}$, in Section 4 we prove the rigidity theorem of the mean convex self-expanders and  finally in Section 5 we prove the inequality on the bottom of spectrum of $L$-stability operator $L$ and the bottom of spectrum of the drifted Laplacian $\mathcal{L}$. 

\section{Definitions and notation}\label{notation}

 For the convenience on some computations in this paper and simplicity of notations, we give the notation of $f$-minimal submanifolds (see, e.g. \cite{CMZ}).
 
Assume that    $\left(M,\bar{g}\right)$ is a smooth  $(n+k)$-dimensional  Riemannian manifold and $f$ is a smooth function on $M$. A smooth metric measure space is a triple $\left(M,\bar{g},e^{-f}dv\right)$ with  a weighted volume form $e^{-f}dv$ on $M$.
 Here $dv$ denote the volume element of $M$ induced by the  metric $\bar{g}$. 

Let $i: \Sigma^n\to (M^{n+k},\bar{g})$ denote the smooth   immersion of an $n$-dimensional submanifold $\Sigma$ into $M$.  Then $(M^{n+k},\bar{g})$ induces a metric, denoted by $g$ on $\Sigma$, such that $i$ is an isometric immersion.  Let  $d\sigma$ denote the volume element of $(\Sigma, g)$. Then the function $f$ restricted on $\Sigma$, still denoted by $f$,  induces a weighted volume element  $e^{-f}d\sigma$ on $\Sigma$ and thus   a smooth metric measure space $(\Sigma, g, e^{-f}d\sigma)$. 

The  isometric immersion $i: (\Sigma^n, g) \to (M^{n+k},\bar{g})$ is said to be properly immersed if,  for any compact subset $\Omega$ in $M$, the pre-image $i^{-1}(\Omega)$ is compact in $\Sigma$.

In this paper, unless otherwise specified, the notations with a bar denote the quantities
corresponding the metric $\bar{g}$ on $M$. For instance $\bar{\nabla}$ and  $\bar{\nabla}^2$ denote  the Levi-Civita connection, and Hession  of $(M, \bar{g})$ respectively. On the other hand, the notations  like ${\nabla}$ denote the quantities corresponding the intrinsic metric ${g}$ on $\Sigma$.
 For instance $\nabla, \Delta$  denote the Levi-Civita connection and  the Laplacian on $(\Sigma, g)$ respectively.

 Let $A$ denote   the second fundamental form of $\Sigma$. The mean curvature vector  ${\bf H}(p)$ of $\Sigma$ at $p\in \Sigma $ is defined  by 
\[{\bf H}(p):=\sum_{i=1}^n(\bar\nabla_{e_i}e_i)^\perp=\sum_{i=1}^nA(e_i,e_i),
\]
where $\{ e_1, e_2, \cdots, e_n\}$ is a local orthonormal frame of  $\Sigma$ at $p$ and    $\perp$ denotes the projection onto the normal bundle of $\Sigma$.

The weighted volume of a measurable subset  $S\subset \Sigma$ is defined by
\begin{equation}\label{notation-eq-vol}V_f(S):=\int_S e^{-f}d\sigma.
\end{equation}

\begin{definition} \label{def-f-min} The weighted mean curvature vector ${\bf H}_f$  of the submanifold $(\Sigma,{g})$  is defined  by 
\begin{equation}{\bf H}_f:={\bf H}+(\overline{\nabla }f)^{\perp}. 
\end{equation}

A submanifold  $(\Sigma, g)$  is called $f$-minimal  if  its weighted mean curvature ${\bf H}_f$ vanishes identically, or equivalently if it satisfies
\begin{equation} \label{f-min}{\bf H}=-(\overline{\nabla} f)^\perp.
\end{equation}

In the case of hypersurfaces, the mean curvature of $\Sigma$  is defined by 
\begin{equation}{\bf H}=-H{\bf n},
\end{equation}
where ${\bf n}$ is the unit normal field on $\Sigma$.  
\end{definition}
From Definition  \ref{def-f-min}, the hypersurface $\Sigma$ is $f$-minimal if and only if 
\[H_f=H-\left<\bar{\nabla}f, {\bf n}\right>=0,
\]
or equivalently
\begin{equation}
H=\left<\bar{\nabla}f, {\bf n}\right>.
\end{equation}

It is known that  an $f$-minimal submanifold  is a critical point of the weighted volume functional defined in (\ref{notation-eq-vol}). On the other hand,  it is also  a minimal submanifold under the conformal metric $\tilde{g}=e^{-\frac{2}{n}f}\bar{g}$ on $M$  (see, e.g. \cite{CMZ3}, \cite{CMZ}).

When $(M, \bar{g})$ is the Euclidean space $(\mathbb{R}^{n+k}, g_0)$, we have some interesting examples of $f$-minimal submanifolds:
\begin{example}\label{example1} If $f=\frac{|x|^2}4$,  $-\frac{|x|^2}4$, and  $-\left<x,w\right>$ respectively,  where $w\in \mathbb{R}^{n+k}$ is a constant vector, an $n$-dimensional $f$-minimal submanifold $\Sigma$ is a  self-shrinker, self-expander and translator for MCF in the Euclidian space $\mathbb{R}^{n+k}$ respectively. 
\end{example}

For  $(\Sigma, g, e^{-f}d\sigma)$, let $L^2(\Sigma, e^{-f}d\sigma)$ denote the space of square-integrable functions on $\Sigma$ (not necessarily a submanifold) with respect to the measure $e^{-f}d\sigma$.
The weighted $L^{2}$ inner product of functions $u$ and $v$ in  $L^2(\Sigma, e^{-f}d\sigma)$ is defined by 
\[
\left\langle u,v\right\rangle _{L^2(\Sigma, e^{-f}d\sigma)}=\int_{\Sigma}uve^{-f}d\sigma.
\]

The drifted Laplacian on $\Sigma$ is defined by 
\[
\Delta_{f}=\Delta-\left\langle \nabla f,\nabla\cdot\right\rangle .
\]

It is well known that $\Delta_f$ is a densely defined self-adjoint
operator in $L^2(\Sigma, e^{-f}d\sigma)$, i.e. for 
$u$ and $v$ in $C^{\infty}_0(\Sigma)$, it holds that 
\begin{equation}
\int_{\Sigma}(\Delta_{f}u) ve^{-f}d\sigma=-\int_{\Sigma}\left\langle \nabla u,\nabla v\right\rangle e^{-f}d\sigma.
\end{equation}

We recall some facts in spectral theory (see more details in,  e.g. \cite{G},  \cite{RS}). Consider the Schr$\ddot{\text{o}}$dinger operator on $\Sigma$:
$$S=\Delta_f+q, \quad q\in L^{\infty}_{loc}(\Sigma).$$
The  weighted $L^2$ spectrum of $S$  is called the spectrum of $S$ for short whenever there is no confusion.
The bottom $s_1$ of the spectrum of $S$ can be characterized by
\begin{align}\label{bottom-0}
\displaystyle s_1=\inf\left\{\frac{\int_{\Sigma}\left(|\nabla\varphi|^{2}-q\varphi^2\right)e^{-f}d\sigma}{\int_{\Sigma}\varphi^2e^{-f}d\sigma};  \varphi\in C_0^{\infty}(\Sigma),  \int_{\Sigma}\varphi^2e^{-f}d\sigma\neq 0\right\}.
\end{align}
A number $s$ is said to be a weighted $L^2$ eigenvalue to $S$ if there exists a smooth nonzero function $u\in L^2(\Sigma, e^{-f}d\sigma)$ satisfying
\begin{equation}\label{eq-schro-1}Su+s u=0,
\end{equation}
The function $u$ in (\ref{eq-schro-1}) is called   the weighted $L^2$ eigenfunction associated to $s$.

In general, if $\Sigma$ is non-compact, the bottom $s_1$ may not be  a weighted $L^2$ eigenvalue and may be $-\infty$. The spectral theory says that if  the spectrum of $S$ is discrete,  the spectrum is the set of all the weighted $L^2$ eigenvalues of $S$, counted with multiplicity, which   is an increasing sequence 
\[ s_1< s_2\leq s_3\leq\cdots
\] with $s_i\to \infty$ as $i\to \infty$.  Further, the variational characterization of $s_i$ states that  the bottom $s_1$ of spectrum  of $S$ is just  the first weighted $L^2$ eigenvalue  with multiplicity $1$.

 Now we  give special notations for self-expanders.  In the following, unless otherwise specified, let $\Sigma$ be an $n$-dimensional  self-expander in $\mathbb{R}^{n+k}, k\geq 1$, that is, $\Sigma$ satisfies the equation
\begin{align}{\bf H}=\dfrac{x^{\perp}}{2}
\end{align}

In the case of codimension $1$, $\Sigma$ is a self-expander if and only if the mean curvature $H$ satisfies that
\begin{align}H=-\frac12\langle x, {\bf n}\rangle.
\end{align}

Observe that  a self-expander $\Sigma$ can be viewed as an $f$-minimal submanifold in $\mathbb{R}^{n+k}$ by taking  $f=-\frac{|x|^2}{4}$ in equation (\ref{f-min}).

The weighted volume of a measurable subset  $S\subset \Sigma$ is given by
\begin{equation}\label{notation-expand-vol}V_f(S):=\int_S e^{\frac{|x|^2}{4}}d\sigma.
\end{equation}

The weighted $L^{2}$ inner product of functions $u$ and $v$ in  $L^2(\Sigma, e^{\frac{|x|^2}{4}}d\sigma)$ is defined by 
\[
\left\langle u,v\right\rangle _{L^2(\Sigma, e^{\frac{|x|^2}{4}}d\sigma)}=\int_{\Sigma}uve^{\frac{|x|^2}{4}}d\sigma.
\]

Denote by  $\mathcal{L}$ the drifted Laplacian on $\Sigma$, i.e.  $\mathcal{L}=\Delta+\frac12\left<x,\nabla\cdot\right>$.

The bottom $\lambda_1$ of the spectrum of $\mathcal{L}$  can be given by
\begin{align}\label{bottom-1}
\displaystyle\lambda_1=\inf\left\{\frac{\int_{\Sigma}|\nabla \varphi|^2e^{\frac{|x|^2}{4}}d\sigma}{\int_{\Sigma}\varphi^2e^{\frac{|x|^2}{4}}d\sigma};  \varphi\in C_0^{\infty}(\Sigma),  \int_{\Sigma}\varphi^2e^{\frac{|x|^2}{4}}d\sigma\neq 0\right\}.
\end{align}

From (\ref{bottom-1}), $\lambda_1$ is nonnegative.
 For self-expanders,   the stability operator, which appears in the second variation of the weighted volume, is a   Schr$\ddot{\text{o}}$dinger operator given by
\begin{equation} \label{L} L=\mathcal{L}+|A|^2-\frac12=\Delta+\frac12\left<x, \nabla\cdot\right>+|A|^2-\frac12.
\end{equation}

\begin{definition}A self-expander $\Sigma$ is said to be $L$-stable  if the following inequality holds  for all  $\varphi\in\mathit{C}^{\infty}_{0}(\Sigma)$,
\begin{equation}\label{f-stable-ine}
-\int_{\Sigma}\varphi (L\varphi)e^{\frac{|x|^2}{4}}d\sigma=\int_{\Sigma}\left(|\nabla\varphi|^{2}-\bigr(|A|^{2}-\frac12)\varphi^2\right)e^{\frac{|x|^2}{4}}d\sigma\geq 0.
\end{equation}
\end{definition}

 $L$-stability  of $\Sigma$ is equivalent to that   the second variation of its weighted volume is nonnegative for any compactly supported normal variation. Denote the bottom of the spectrum of $L$ by $\mu_1$.
$L$-stability  means  $\mu_1\geq 0$.

\section{Spectral properties of the drifted Laplacian}\label{spectrum}

In this section, we show   the
discreteness of the spectrum of the drifted Laplacian  $\mathcal{L}=\Delta+\frac12\left<x, \nabla\cdot\right>$ and study  the bottom of the  spectrum of $\mathcal{L}$ for complete properly immersed self-expanders.  
We start by giving the following identities:
\begin{lemma}\label{lem} For a manifold $(\Sigma, g)$ (not necessarily submanifold), 
\begin{equation}\label{eq-6}
\Delta_f(ue^h)=e^h\left\{\Delta_{f-2h}u+[\Delta h+\langle\nabla (h-f),\nabla h\rangle]u\right\},
\end{equation}
where $f$ and $h$ are smooth functions on $\Sigma$.

In particular, if $h=f$,
\begin{equation}\label{3-eq-lem-1}
\Delta_f(ue^f)=e^f\left[\Delta_{(-f)}u+(\Delta f) u\right].
\end{equation}
\end{lemma}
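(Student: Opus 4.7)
The statement is a pure identity about the drifted Laplacian $\Delta_f = \Delta - \langle \nabla f, \nabla\cdot\rangle$ on an arbitrary Riemannian manifold, so the plan is to prove it by a direct computation using only the product rule for $\nabla$ and $\Delta$; no geometry of submanifolds is needed.

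First I would compute the gradient of the product $ue^h$ by the product rule:
\begin{equation*}
\nabla(ue^h) = e^h(\nabla u + u\,\nabla h).
\end{equation*}
Then, taking the divergence of this vector field (equivalently, applying $\Delta$ to $ue^h$), I would use $\operatorname{div}(\varphi X) = \varphi\operatorname{div}X + \langle\nabla\varphi, X\rangle$ with $\varphi = e^h$ and $X = \nabla u + u\nabla h$ to obtain
\begin{equation*}
\Delta(ue^h) = e^h\bigl[\Delta u + 2\langle\nabla h,\nabla u\rangle + u\,\Delta h + u|\nabla h|^2\bigr].
\end{equation*}
Subtracting $\langle\nabla f,\nabla(ue^h)\rangle = e^h\bigl[\langle\nabla f,\nabla u\rangle + u\langle\nabla f,\nabla h\rangle\bigr]$ then gives an explicit expression for $\Delta_f(ue^h)$.

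The key algebraic step is then to regroup the $\nabla u$--terms and the $u$--terms separately: the combination $2\langle\nabla h,\nabla u\rangle - \langle\nabla f,\nabla u\rangle = -\langle\nabla(f-2h),\nabla u\rangle$ is exactly the drift part of $\Delta_{f-2h}u$, so the $\nabla u$--terms assemble into $\Delta_{f-2h}u$; and the pointwise $u$--terms give $u\bigl[\Delta h + |\nabla h|^2 - \langle\nabla f,\nabla h\rangle\bigr] = u\bigl[\Delta h + \langle\nabla(h-f),\nabla h\rangle\bigr]$. This yields \eqref{eq-6}. Specializing to $h=f$, the second summand collapses to $(\Delta f)u$ while $\Delta_{f-2h} = \Delta_{-f}$, which gives \eqref{3-eq-lem-1}.

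There is no real obstacle; the only thing to watch is bookkeeping of the cross terms so that the drift $\nabla(f-2h)$ (and not, e.g., $\nabla(f-h)$) is correctly identified. Since the identity is stated for an abstract Riemannian manifold, I would not invoke anything about $\Sigma$ being a submanifold of Euclidean space; the proof consists purely of the two applications of the product rule described above and the regrouping of the resulting six terms.
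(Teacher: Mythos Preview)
Your proposal is correct and follows essentially the same approach as the paper: compute $\nabla(ue^h)$ and $\Delta(ue^h)$ by the product rule, subtract the drift term $\langle\nabla f,\nabla(ue^h)\rangle$, and regroup to recognize $\Delta_{f-2h}u$ and the zero-order term. The paper's proof is exactly this direct computation, with the same intermediate formulas you wrote down.
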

\begin{proof}The results come from the direct computations. 
\begin{align}
\nabla(ue^h)&=e^h(\nabla u+u\nabla h)\label{eq-grad}\\
\Delta (ue^h)&=e^h\left[\Delta u+2\langle\nabla u,\nabla h\rangle+u(\Delta h+|\nabla h|^2)\right]\label{eq-lap}
\end{align}
By  (\ref{eq-grad}) and (\ref{eq-lap}), we have
\begin{align}\label{eq-weigh}
\Delta_f (ue^h)&=e^h\left[\Delta u+2\langle\nabla u,\nabla h\rangle+u(\Delta h+|\nabla h|^2)\right]\nonumber\\
&\qquad -e^h(\langle\nabla f,\nabla u\rangle+u\langle\nabla f,\nabla h\rangle)\nonumber\\
&=e^h[\Delta u-\langle\nabla(f-2 h),\nabla u\rangle+u(\Delta h+\langle\nabla h,\nabla (h-f)\rangle]
\end{align}
\end{proof}
Recall that a function $h: \Sigma\rightarrow \mathbb{R}$  is said to be  proper if, for any bounded closed subset  $I\subset \mathbb{R}$, the inverse image $h^{-1}(I)$ is compact in $\Sigma$. Now we prove Theorem \ref{th-1}. 
\medskip

 {\it Proof of Theorem \ref{th-1}.} Take $f=-{\frac{|x|^2}{4}}$. 
Consider the unitary isomorphism 
\[U:L^{2}\left(\Sigma, d\sigma\right)\to L^2\left(\Sigma, e^{-f}d\sigma\right)=L^{2}(\Sigma, e^{\frac{|x|^2}{4}}d\sigma)
\]
given by $Uu=ue^{\frac{f}{2}}$. Take $h=\dfrac{f}{2}$ in \eqref{eq-6} and note $\mathcal{L}=\Delta_{f}$. 
  We have
\[
\mathcal{L}=UTU^{-1},  \quad T=\Delta+\frac{1}{2}\Delta f-\frac{1}{4}\left|\nabla f\right|^{2}.
\]

By the spectral theory, the discreteness  of the spectrum of $\mathcal{L}$  and the discreteness of the spectrum of $T$ are equivalent.

Note that ${\bf H}=-(\bar{\nabla}f)^{\perp}$ and $f=-{\frac{|x|^2}{4}}$.  It holds that 
\begin{align}\label{3-eq-prop-7}
\Delta f&=\sum_{i=1}^n\overline{\nabla}^2f(e_i,e_i)+\left\langle \overline{\nabla}f,{\bf H}\right\rangle=-\dfrac{n}{2}-|\bf{H}|^2.
\end{align}
Since Sigma is a self-expander, we have
\begin{align}\label{3-eq-prop-1}
\frac{1}{4}\left|\nabla f\right|^{2}-\frac{1}{2}\Delta f  
 =&\frac{1}{4}\left|\overline{\nabla}f\right|^{2}+\frac{1}{4}|\left(\overline{\nabla}f\right)^{\perp}|^{2}+\frac n4\nonumber\\
 =&\frac14|x|^2+\frac1{16}|x^{\perp}|^2+\frac n4\nonumber\\
 \geq &\frac14|x|^2+\frac n4
\end{align}

Since $\Sigma$ is properly immersed in $\mathbb{R}^{n+k}$,   the function $\frac14|x|^2+\frac n4$ restricted on $\Sigma$ is proper and tends to $\infty$ when the intrinsic distance $d^{\Sigma}(p,x)$ of $\Sigma$  tends $ \infty$, where $p\in \Sigma$ fixed. (\ref{3-eq-prop-1}) implies that $\frac{1}{4}\left|\nabla f\right|^{2}-\frac{1}{2}\Delta f $  also tends to $\infty$ when  $d^{\Sigma}(p,x)\rightarrow \infty$. This implies that  the spectrum of  the operator $T$ is discrete  (e.g. \cite{RS} page 120). Hence, the spectrum of $\mathcal{L}$ is discrete.

By the discreteness of the spectrum of the drifted Laplacian $\mathcal{L}$, the variational character of  the first  weighted $L^2$ eigenvalue of  $\mathcal{L}$  implies that it is just the bottom $\lambda_1$ of the spectrum of $\mathcal{L}$.

\qed

We consider the spectrum of $\mathcal{L}$  on $\mathbb{R}^n$ and obtain  that
\begin{prop}   i) The spectrum of $\mathcal{L}=\Delta+\frac12\left<x, \nabla\cdot\right>$  on $\mathbb{R}^n$ is discrete and 
 the weighted $L^2$ eigenvalues of $\mathcal{L}$,  counted by multiplicity,  are the following 
\begin{equation}
\lambda=\frac n2+\frac12\sum_{i=1}^nk_i, \quad k_i\in \{0\}\cup\mathbb{N},
\end{equation}
where  $\frac{k_i}{2}, $  are the weighted $L^2$ eigenvalues of the operator $\frac{d^2}{dx_i^2}-\frac {x_i}2\frac{d}{dx_i}, x_i\in\mathbb{R}$ with respect to the weighted measure $e^{-\frac{x_i^2}{4}}dx_i$, that is, the weighted $L^2$ eigenfunctions associated to $\frac{k_i}{2}$ are the Hermite polynomials $H_{k_i}(\frac{x_i}{2})$.

ii) The weighted $L^2$ eigenfunction associated to $\lambda$, counted by multiplicity,  is $\Phi=\Pi_{i=1}^n{H}_{k_i}(\frac{x_i}{2})e^{-\frac{|x|^2}{4}}$, where $\Pi_{i=1}^n{H}_{k_i}(\frac{x_i}{2})$ are the products of  the corresponding Hermite polynomials.

iii) The bottom $\lambda_1$ of the spectrum of $\mathcal{L}$, that is,   the first weighted $L^2$ eigenvalue of $\mathcal{L}$ is $\frac n2$ with multiplicity $1$;
\end{prop}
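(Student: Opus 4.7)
The plan is to reduce the problem to the classical Hermite eigenbasis via a weighted conjugation. First, I would view $\mathbb{R}^n$ as the hyperplane through the origin in $\mathbb{R}^{n+1}$, which is trivially a properly immersed self-expander (since $\mathbf{H}=0$ and $x^{\perp}=0$); then Theorem \ref{th-1} immediately gives discreteness of the spectrum of $\mathcal{L}$ and identifies it with the set of weighted $L^2$ eigenvalues counted with multiplicity. Hence it suffices to enumerate all eigenpairs explicitly.

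Next, I would introduce the unitary isomorphism
$$\Psi: L^2\bigl(\mathbb{R}^n, e^{|x|^2/4}dx\bigr) \to L^2\bigl(\mathbb{R}^n, e^{-|x|^2/4}dx\bigr), \qquad \Psi(u)=u\,e^{|x|^2/4},$$
and compute its conjugation of $\mathcal{L}$. A direct computation, or equivalently an application of Lemma \ref{lem} with $f=-|x|^2/4$ and $h=|x|^2/4$, yields
$$\mathcal{L}u = e^{-|x|^2/4}\Bigl(\Delta v - \tfrac{1}{2}\langle x,\nabla v\rangle - \tfrac{n}{2}v\Bigr), \qquad v = \Psi(u),$$
so $\Psi$ conjugates $\mathcal{L}$ to $\mathcal{L}_s - \tfrac{n}{2}\mathrm{Id}$, where $\mathcal{L}_s := \Delta - \tfrac{1}{2}\langle x,\nabla\cdot\rangle$ is the Ornstein--Uhlenbeck (self-shrinker) operator on $L^2(\mathbb{R}^n,e^{-|x|^2/4}dx)$. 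Consequently the weighted $L^2$ eigenvalues of $\mathcal{L}$ are exactly those of $\mathcal{L}_s$ shifted by $+\tfrac{n}{2}$, and eigenfunctions correspond under multiplication by $e^{-|x|^2/4}$.

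Then I would diagonalize $\mathcal{L}_s$ on $\mathbb{R}^n$ by separation of variables. Substituting $H_k(x_i/2)$ and using the Hermite ODE $H_k''(y)-2yH_k'(y)+2kH_k(y)=0$ shows that $H_k(x_i/2)$ is an eigenfunction of the one-dimensional operator $\tfrac{d^2}{dx_i^2}-\tfrac{x_i}{2}\tfrac{d}{dx_i}$ with eigenvalue $k/2$, so by separation the products $\prod_{i=1}^n H_{k_i}(x_i/2)$ are eigenfunctions of $\mathcal{L}_s$ on $\mathbb{R}^n$ with eigenvalues $\tfrac{1}{2}\sum_i k_i$. The essential technical step is completeness: the Hermite polynomials form a complete orthogonal basis of $L^2(\mathbb{R},e^{-x^2/4}dx)$, a classical fact following from finiteness of Gaussian moments and polynomial density, so by tensorization their products exhaust the spectrum of $\mathcal{L}_s$.

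Transferring back via $\Psi^{-1}$ gives parts (i) and (ii): the eigenvalues of $\mathcal{L}$ are $\lambda = \tfrac{n}{2}+\tfrac{1}{2}\sum_i k_i$ with eigenfunctions $\Phi = \prod_i H_{k_i}(x_i/2)\, e^{-|x|^2/4}$. Part (iii) is then immediate from the formula for $\lambda$: the value $\tfrac{n}{2}$ is attained only when $k_1=\cdots=k_n=0$, so the corresponding eigenspace is one-dimensional and spanned by $e^{-|x|^2/4}$. I expect the main technical hurdle to be the completeness step for Hermite polynomials; everything else reduces to the direct conjugation identity above.
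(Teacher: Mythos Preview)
Your approach is essentially identical to the paper's: both conjugate $\mathcal{L}$ to $\pazocal{L}-\tfrac{n}{2}$ (the Ornstein--Uhlenbeck operator on $L^2(\mathbb{R}^n,e^{-|x|^2/4}dx)$) via multiplication by $e^{-|x|^2/4}$, then separate variables and invoke the classical Hermite basis. One minor correction: in your citation of Lemma~\ref{lem} the parameters should be $h=f=-|x|^2/4$ (not $h=|x|^2/4$), which is exactly the special case~(\ref{3-eq-lem-1}) the paper uses; your stated conjugation formula is nonetheless correct.
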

\begin{proof} Take $f=-\frac{|x|^2}{4}$ in  (\ref{3-eq-lem-1}). Note, $\mathcal{L}=\Delta+\frac12\left<x, \nabla \cdot\right>$ and $\pazocal{L}=\Delta-\frac12\left<x, \nabla \cdot\right>$.
\begin{align}\label{R-1}\mathcal{L}(ue^{-\frac{|x|^2}{4}})=e^{-\frac{|x|^2}{4}}(\pazocal{L}-\frac n2)u
\end{align}
Observe that $u\in L^2(\mathbb{R}^n, e^{-\frac{|x|^2}{4}}d\sigma)$ if and only if $ue^{-\frac{|x|^2}{4}}\in L^2(\mathbb{R}^n, e^{\frac{|x|^2}{4}}d\sigma)$.
\begin{align}\label{spec-L}
\pazocal{L}&=\sum_{i=1}^n(\dfrac{\partial^2}{\partial x_i^2}-\frac{x_i}2\frac{\partial}{\partial x_i})
\end{align}
For the operator $\frac{d^2}{dt^2}-\frac{t}{2}\frac{d}{dt}, t\in \mathbb{R}$, it is known that its spectrum  on $L^2(\mathbb{R},e^{-\frac{t^2}{4}}dt)$ is discrete and the Hermite polynomials $H_{k}(\frac x2)$ are  orthonormal eigenfunctions associated to $\frac{k}{2}$, $k\in \{0\}\cup\mathbb{N}$,  which form a complete orthonormal system for space $L^2(\mathbb{R}, e^{-\frac{t^2}{4}}dt)$.  
By this fact and (\ref{spec-L}),  one can verify  that the products  $\Pi_{i=1}^n{H}_{k_i}(\frac{x_i}{2})$ of  $n$  eigenfunctions  $H_{k_i}(\frac{x_i}{2})$ of $\frac{d^2}{dx_i^2}-\frac{x_i}{2}\frac{d }{dx_i}$ respectively, $1\leq i\leq n, k_i\in \{0\}\cup \mathbb{N}$,  are the eigenfunctions of $\pazocal{L}$ associated to the eigenvalue $\displaystyle\frac12\sum_{i=1}^nk_i$ and, by a standard argument in functional analysis,,  form a complete system for  the space $L^2(\mathbb{R}^n, e^{-\frac{|x|^2}{4}}d\sigma)$. 

By (\ref{R-1}), $\mathcal{L}$ has the discrete spectrum and the weighted $L^2$ eigenvalues,  counted by multiplicity,  are the following 
\begin{equation}
\lambda=\frac n2+\frac12\sum_{i=1}^nk_i, \quad k_i\in \{0\}\cup\mathbb{N},
\end{equation}
and associated eigenfunctions are $\Phi=\Pi_{i=1}^n{H}_{k_i}(\frac{x_i}{2})e^{-\frac{|x|^2}{4}}$, which form a complete orthonormal system 
for  the space $L^2(\mathbb{R}^n, e^{\frac{|x|^2}{4}}d\sigma)$.

\end{proof}

Now we prove Theorem \ref{th-2} in which we study the universal lower bound for the  bottom of the spectrum of $\mathcal{L}$ for self-expanders.
\medskip

{\it Proof of Theorem \ref{th-2}.}  Taking $u\equiv 1$  and $f=-\frac{|x|^2}{4}$ in (\ref{3-eq-lem-1}) yields 
\begin{equation*}
\mathcal{L}(e^{-\frac{|x|^2}{4}})=e^{-\frac{|x|^2}{4}}\left[(\Delta (-\frac{|x|^2}{4}) \right].
\end{equation*}
 Using (\ref{3-eq-prop-7}),  we have the positive function $v=e^{-\frac{|x|^2}{4}}>0$ satisfying that
\begin{align}\label{3-eq-expan-0}
\mathcal{L}v+(\dfrac{n}{2}+|{\bf H}|^2)v=0.
\end{align}

It is well known that  (\ref{3-eq-expan-0}) implies that for any  $\varphi\in C_0^{\infty}(\Sigma)$,
\begin{align}\int_{\Sigma}\left[|\nabla \varphi|^2-(\frac n2+|{\bf H}|^2)\varphi^2\right]e^{\frac{|x|^2}{4}}d\sigma\geq 0.
\end{align}

Then the bottom $\lambda_1$ of spectrum of $\mathcal{L}$ satisfies that 
\begin{align}\label{ine-lambda1-1}
\lambda_1= \inf_{\varphi\in C_0^{\infty}(\Sigma),\int \varphi^2e^{\frac{|x|^2}{4}}d\sigma\neq 0}\dfrac{\int|\nabla \varphi|^2e^{\frac{|x|^2}{4}}d\sigma}{\int \varphi^2e^{\frac{|x|^2}{4}}d\sigma}\geq \dfrac{n}{2}+\inf_{x\in \Sigma}|{\bf H}||^2\geq \frac n2.
\end{align}

So we have proved that the inequality holds. 

Now we assume that   the equality $\displaystyle\lambda_1=\frac n2$ holds.  By Theorem \ref{th-1},   the spectrum of $\mathcal{L}$ of $\Sigma$  is discrete and $\lambda_1$ is the first  weighted $L^2$ eigenvalue of $\mathcal{L}$.  Then there exists the first eigenfunction $u>0$  such that $u\in W^{1,2}(\Sigma)$ and
 \begin{align}\label{3-eq-expan-1-1}\mathcal{L}u+\lambda_1u=0.
 \end{align}

Note $v=e^{-\frac{|x|^2}{4}}>0$. By (\ref{3-eq-expan-0}),
\begin{align}
\mathcal{L}\log v&=\dfrac{\mathcal{L}v}{v}-\dfrac{|\nabla v|^2}{v^2}=-\dfrac n2-|{\bf H}|^2-|\nabla \log v|^2
\end{align}
Take $\phi\in \mathit{C}_0^{\infty}(\Sigma)$.
\begin{align}\label{eq-bottom-1}
\int_{\Sigma}\left(\dfrac{n}{2}+|{\bf H}|^2+|\nabla \log v|^2\right)\phi^2e^{\frac{|x|^2}{4}}d\sigma&=-\int_{\Sigma}\phi^2(\mathcal{L}\log v)e^{\frac{|x|^2}{4}}d\sigma\nonumber\\
&=\int_{\Sigma}\left<\nabla \phi^2, \nabla \log v\right>e^{\frac{|x|^2}{4}}d\sigma
\end{align}
Since
\[\int_{\Sigma}\left<\nabla \phi^2, \nabla \log w\right>e^{\frac{|x|^2}{4}}d\sigma\leq \int_{\Sigma}|\nabla \phi|^2e^{\frac{|x|^2}{4}}d\sigma
+\int_{\Sigma}|\nabla \log w|^2\phi^2e^{\frac{|x|^2}{4}}d\sigma,
\]
(\ref{eq-bottom-1}) implies that
\begin{align}\label{eq-bottom-2}
\int_{\Sigma}(\dfrac{n}{2}+|{\bf H}|^2)\phi^2e^{\frac{|x|^2}{4}}d\sigma
&\leq\int_{\Sigma}|\nabla \phi|^2e^{\frac{|x|^2}{4}}d\sigma.
\end{align}

Choose $\phi=\varphi_j u$, where $\varphi_j$ are the non-negative cut-off functions   satisfying  that  $\varphi_j$ is $1$ on $B_j$,  $|\nabla\varphi|\leq 1$ on $B_{j+1}\setminus B_j$, and $\varphi=0$ on $\Sigma\setminus B_{j+1}$.  Substitute $\phi$ in (\ref{eq-bottom-2}):
\begin{align}\label{eq-bottom-3}
\int_{\Sigma}|(\dfrac{n}{2}+|{\bf H}|^2)\varphi_i^2u^2e^{\frac{|x|^2}{4}}d\sigma
&\leq\int_{\Sigma}|\nabla (\varphi_ju)|^2e^{\frac{|x|^2}{4}}d\sigma.
\end{align}
Note that
\begin{align}\label{eq-bottom-4}
\int_{\Sigma}|\nabla (\varphi_ju)|^2e^{\frac{|x|^2}{4}}d\sigma&\leq 2\int_{\Sigma}\varphi_j^2|\nabla u|^2e^{\frac{|x|^2}{4}}d\sigma+2\int_{\Sigma}|\nabla\varphi_j|^2u^2e^{\frac{|x|^2}{4}}d\sigma,
\end{align}
and $u\in W^{1,2}(\Sigma)$.  Letting $j\rightarrow \infty$ in  (\ref{eq-bottom-4}) and using the monotone convergence theorem, 
\begin{align}\label{eq-bottom-5}
\int_{\Sigma}|\nabla (\varphi_ju)|^2e^{\frac{|x|^2}{4}}d\sigma&\rightarrow \int_{\Sigma}|\nabla u|^2e^{\frac{|x|^2}{4}}d\sigma.
\end{align}
 Letting $j\rightarrow \infty$ in  (\ref{eq-bottom-3}) and using the monotone convergence theorem again, we have
\begin{align}\label{eq-bottom-6}
\int_{\Sigma}(\dfrac{n}{2}+|{\bf H}|^2)u^2e^{\frac{|x|^2}{4}}d\sigma
&\leq \int_{\Sigma}|\nabla u|^2e^{\frac{|x|^2}{4}}d\sigma\nonumber\\
&=\frac n2\int_{\Sigma}u^2e^{\frac{|x|^2}{4}}d\sigma.
\end{align}

Hence $|\bf H|=0.$ Since ${\bf H}= \frac12 x^{\perp}=0$. $\Sigma$ is an $n$-dimensional  minimal cone. Since $\Sigma$ is smooth, $\Sigma$ must be $\mathbb{R}^n$.  By the self-expander equation, it passes through the  origin.

\qed

It is known that a submanifold in  $\mathbb{R}^{n+k}, k>0,$ is minimal if and only if the coordinate functions restricted on $\Sigma$ are the harmonic functions. For self-expanders, there are similar properties:

\begin{prop} An $n$-dimensional  immersed submanifold  $\Sigma$ in $\mathbb{R}^{n+k}, k>0,$ is a self-expander if and only if the coordinate functions restricted on $\Sigma$ are eigenfunctions of the drifted Laplacian $\mathcal{L}$. Moreover, they are corresponding the eigenvalue $-\frac12$, i.e. they satisfy
\[
\mathcal{L}x_i-\frac 12x_i=0
\]
for all $i=1,2,\ldots, n+k$.
\end{prop}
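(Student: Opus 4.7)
The plan is to compute $\mathcal{L} x_i$ directly using two standard submanifold identities, and then observe that the resulting expression captures precisely the self-expander equation $\mathbf{H} = \tfrac{1}{2} x^{\perp}$, tested against each coordinate direction $e_i$.

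First I would compute the intrinsic gradient and Laplacian of the coordinate function $x_i = \langle x, e_i\rangle$ on $\Sigma$, where $\{e_i\}_{i=1}^{n+k}$ is the standard basis of $\mathbb{R}^{n+k}$. The gradient is the tangential projection $\nabla x_i = e_i^{\top}$, and the well-known identity $\Delta x = \mathbf{H}$ (the Laplacian of the position vector equals the mean curvature vector) gives $\Delta x_i = \langle \mathbf{H}, e_i\rangle$. Since $\mathbf{H}$ is normal, $\langle \mathbf{H}, e_i\rangle = \langle \mathbf{H}, e_i^{\perp}\rangle$, but it is cleaner to keep it as $\langle \mathbf{H}, e_i\rangle$.

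Next I would combine these to obtain
\begin{equation*}
\mathcal{L} x_i = \Delta x_i + \tfrac{1}{2}\langle x, \nabla x_i\rangle = \langle \mathbf{H}, e_i\rangle + \tfrac{1}{2}\langle x, e_i^{\top}\rangle.
\end{equation*}
Using orthogonality of tangent and normal bundles, $\langle x, e_i^{\top}\rangle = \langle x^{\top}, e_i\rangle = \langle x, e_i\rangle - \langle x^{\perp}, e_i\rangle = x_i - \langle x^{\perp}, e_i\rangle$. Substituting yields
\begin{equation*}
\mathcal{L} x_i - \tfrac{1}{2} x_i = \langle \mathbf{H}, e_i\rangle - \tfrac{1}{2}\langle x^{\perp}, e_i\rangle = \bigl\langle \mathbf{H} - \tfrac{1}{2} x^{\perp},\, e_i\bigr\rangle.
\end{equation*}

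The conclusion is then immediate in both directions: if $\Sigma$ is a self-expander, then $\mathbf{H} - \tfrac{1}{2}x^{\perp} = 0$, so $\mathcal{L} x_i - \tfrac{1}{2} x_i = 0$ for every $i$, meaning each $x_i$ is an eigenfunction of $\mathcal{L}$ with eigenvalue $-\tfrac{1}{2}$ (in the sign convention $\mathcal{L} u + \lambda u = 0$). Conversely, if $\mathcal{L} x_i - \tfrac{1}{2} x_i = 0$ for all $i = 1, \dots, n+k$, then the vector $\mathbf{H} - \tfrac{1}{2} x^{\perp}$ in $\mathbb{R}^{n+k}$ has vanishing inner product with every basis vector, hence vanishes, recovering the self-expander equation. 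There is no real obstacle here; the only subtlety is keeping track of the tangential/normal decomposition of $e_i$ and $x$ so that the cross terms $\langle x^{\top}, e_i^{\top}\rangle$ simplify correctly via $\langle x^{\top}, e_i^{\perp}\rangle = 0$.
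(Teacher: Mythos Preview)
Your proof is correct and follows essentially the same route as the paper: both compute $\mathcal{L}x_i$ directly via $\Delta x_i = \langle \mathbf{H}, e_i\rangle$ and the tangential/normal decomposition, arriving at $\mathcal{L}x_i - \tfrac{1}{2}x_i = \langle \mathbf{H} - \tfrac{1}{2}x^{\perp}, e_i\rangle$. The only cosmetic difference is that the paper packages $\mathbf{H} - \tfrac{1}{2}x^{\perp}$ as the weighted mean curvature $\mathbf{H}_f$ with $f = -|x|^2/4$, whereas you work with the self-expander equation directly.
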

\begin{proof}   Take $f=-\frac{|x|^2}{4}.$
\begin{align*}
\mathcal{L}x_i&=\Delta x_i-\left<\nabla f, \nabla x_i\right>\\
&=\sum_{j=1}^n\bar{\nabla}^2x_i(e_j,e_j)+\left<{\bf H}, \bar{\nabla} x_i\right>-\left<\nabla f, \nabla x_i\right>\\
&=\left<{\bf H}+(\bar{\nabla}f)^{\perp}, \bar{\nabla} x_i\right>-\left<\bar{\nabla}f, \bar{\nabla}x_i\right>\\
&=\left<{\bf H}_f, \bar{\nabla} x_i\right>+\dfrac{x_i}{2}.\\
\end{align*}
In the above $\{ e_1, e_2, \cdots, e_n\}$ is an orthonormal basis of the tangent space $T_p\Sigma$, $p\in \Sigma$.
\begin{equation}\mathcal{L}x_i-\dfrac{x_i}{2}=\left<{\bf H}_f, \bar{\nabla} x_i\right>.
\end{equation}
So the left-hand side is zero for all $i$ if and only only if $\langle {\bf H}_f,e_i \rangle=0$ for $i$ which is equivalent to ${\bf H}_f=0$. 

\end{proof}

\begin{remark}The above results hold locally. For properly immersed submanifolds, it has been  known that the coordinate functions are the weighted $L^2$ eigenfunctions  for the corresponding drifted Laplacian for a self-shrinker. For self-expanders, the corresponding drifted Laplacian still has discrete spectrum but the coordinate functions are not the weighted  $L^2$ eigenfunctions. 
\end{remark}

In the rest of this section,  we discuss self-expanders of codimension $1$  whose  equation is
\[ H=-\frac12\langle x, {\bf n}\rangle.
\]
Recall that in \cite{CMZ2}, Mejia and the authors calculated the Simons' type equations for general $f$-minimal hypersurfaces as follows.
\begin{prop}\label{prop1} (Propostion 1 in \cite{CMZ2})
Let $(\Sigma^{n}, g)$ be an $f$-minimal hypersurface isometrically immersed in a smooth metric measure space $(M,\bar{g},e^{-f}d\mu)$. Then the  mean curvature $H$ of $\Sigma$ satisfies that 
\begin{eqnarray}\label{f-lap-H-1}
\Delta_fH&=&2\sum_{i=1}^{n}(\overline{\nabla}^3f)_{i\nu i}-\sum_{i=1}^n(\overline{\nabla}^3f)_{\nu i i}+2\sum_{i,j=1}^{n}a_{ij}(\overline{\nabla}^2f)_{ij}\\
&&-\overline{\textrm{Ric}}_{f}(\nu,\nu)H-|A|^{2}H,\nonumber
\end{eqnarray}
where $\{e_1,\ldots, e_n\}$ is a local orthonormal frame field on $\Sigma$, $\nu$ denotes the unit normal to $\Sigma$, $a_{ij}=A(e_i,e_j)$, and $\overline{\textrm{Ric}}_{f}=\overline{\textrm{Ric}}+\overline{\nabla}^2f$.
\end{prop}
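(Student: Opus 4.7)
The plan is to derive the identity by differentiating the $f$-minimality equation $H = \nu(f) = \langle \bar\nabla f,\nu\rangle$ twice tangentially, tracing, and then forming $\Delta_f H = \Delta H - \langle \nabla f,\nabla H\rangle$. At a point $p\in\Sigma$ I would fix an orthonormal frame $\{e_i\}$ on $\Sigma$ that is geodesic at $p$ for the intrinsic connection, so that $\nabla_{e_k}e_i(p)=0$ and the Gauss--Weingarten formulas take a simple form at $p$. A first differentiation, using Weingarten $\bar\nabla_{e_i}\nu = \pm\sum_j a_{ij}e_j$, gives
\[
e_iH = (\bar\nabla^2f)(e_i,\nu)\pm\sum_j a_{ij}\,e_j(f),
\]
and a second differentiation, together with the identity $e_k(e_j(f))|_p = (\bar\nabla^2f)(e_k,e_j)\pm a_{kj}H$ coming from the Gauss formula, yields after tracing an expression for $\Delta H$ containing five ingredients: $\sum_i(\bar\nabla^3f)_{ii\nu}$, $\pm H(\bar\nabla^2f)(\nu,\nu)$, $\pm 2\sum_{i,j}a_{ij}(\bar\nabla^2f)_{ij}$, $\pm\sum_{i,j}a_{ij;i}\,e_j(f)$, and $-|A|^2H$.

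Next I would apply the Codazzi--Mainardi equation for a hypersurface in a general ambient manifold, traced in one index, to rewrite $\sum_i a_{ij;i}$ as $H_{,j}\pm\bar{\mathrm{Ric}}(\nu,e_j)$. Contracting with $e_j(f)$ and decomposing $(\bar\nabla f)^\top = \bar\nabla f - H\nu$ turns the Codazzi-type term into a combination of $\langle\nabla H,\nabla f\rangle$, $\bar{\mathrm{Ric}}(\nu,\bar\nabla f)$, and $H\bar{\mathrm{Ric}}(\nu,\nu)$. Forming $\Delta_f H = \Delta H - \langle\nabla f,\nabla H\rangle$ precisely cancels the $\langle\nabla H,\nabla f\rangle$ piece, which otherwise would be an extra tangential term that does not appear in the target identity.

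Finally, to convert $\sum_i(\bar\nabla^3f)_{ii\nu}$ into the symmetric combination $2\sum_i(\bar\nabla^3f)_{i\nu i}-\sum_i(\bar\nabla^3f)_{\nu ii}$ in the target formula, I would use the Ricci identity for the third covariant derivative of the scalar $f$,
\[
(\bar\nabla^3f)(\nu,e_i,e_i)-(\bar\nabla^3f)(e_i,\nu,e_i)=\pm\bar R(\nu,e_i,e_i,\bar\nabla f),
\]
together with the Hessian symmetry $(\bar\nabla^3f)(e_i,\nu,e_i)=(\bar\nabla^3f)(e_i,e_i,\nu)$. Summing over $i$ picks up an additional $\bar{\mathrm{Ric}}(\nu,\bar\nabla f)$ contribution; set up correctly, this cancels the one produced by the Codazzi trace, and the remaining $H(\bar\nabla^2f)(\nu,\nu)$ and $H\bar{\mathrm{Ric}}(\nu,\nu)$ pieces combine via $\bar{\mathrm{Ric}}_f=\bar{\mathrm{Ric}}+\bar\nabla^2f$ into the term $-\bar{\mathrm{Ric}}_f(\nu,\nu)H$ of the stated identity.

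The main obstacle is the careful bookkeeping of signs through the Weingarten, Gauss, Codazzi and Ricci identities so that the two $\bar{\mathrm{Ric}}(\nu,\bar\nabla f)$ contractions cancel exactly, and so that the signs on $H(\bar\nabla^2f)(\nu,\nu)$ and $H\bar{\mathrm{Ric}}(\nu,\nu)$ combine correctly to produce $-\bar{\mathrm{Ric}}_f(\nu,\nu)H$ rather than partially cancelling. Matching the paper's sign convention for $a_{ij}$ (under which $\sum_i a_{ii}=H$, consistent with $\vec{H}=-H\nu$) is where the subtle technical point lies.
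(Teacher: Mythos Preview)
The paper does not supply its own proof of this proposition: it is quoted verbatim as Proposition~1 of \cite{CMZ2} and is used as a black box to obtain the Simons-type equations \eqref{simons-H} and \eqref{simons-A}. So there is no in-paper argument to compare against.

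Your outline is the standard derivation and is essentially what is carried out in \cite{CMZ2}: differentiate the $f$-minimality relation $H=\langle\bar\nabla f,\nu\rangle$ twice along tangent directions using the Gauss--Weingarten formulas, trace, invoke the traced Codazzi equation to handle $\sum_i a_{ij;i}$, and use the Ricci commutation identity for $\bar\nabla^3 f$ to pass between the index orderings $ii\nu$, $i\nu i$, $\nu ii$. The observation that $\sum_{i=1}^n \bar R(\nu,e_i,e_i,\bar\nabla f)=\overline{\mathrm{Ric}}(\nu,\bar\nabla f)$ (since the missing $\nu$-summand vanishes) is exactly what makes the two curvature contractions cancel, and the remaining Hessian and Ricci pieces in $\nu$ assemble into $-\overline{\mathrm{Ric}}_f(\nu,\nu)H$.

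What you have written, however, is a plan rather than a proof: every potentially delicate sign is recorded as $\pm$, and you correctly identify that fixing those signs under the convention ${\bf H}=-H\nu$, $a_{ij}=\langle\bar\nabla_{e_i}e_j,\nu\rangle$ is the only nontrivial content. To turn this into an actual proof you must commit to a sign convention at the outset (the paper's is $\bar\nabla_{e_i}\nu=\sum_j a_{ij}e_j$ with $H=\sum_i a_{ii}$) and carry it through each step, rather than deferring the resolution. Once that is done the computation is routine and matches the cited reference.
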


\begin{prop} \label{simons-eq-grad}(Corollary  3 in \cite{CMZ2}) Let $(M^{n+1},\overline{g}, e^{-f}d\mu)$ be a smooth metric metric space satisfying $\overline{Ric}_f=C\overline{g}$, where $C$ is a constant. If $(\Sigma,g)$ is an $f$-minimal hypersurface isometrically immersed in $M$, then it holds that on $\Sigma$
\begin{eqnarray}
\frac{1}{2}\Delta_f|A|^{2}&=&|\nabla A|^{2}+C|A|^{2}-|A|^{4}
+\sum_{i,j=1}^{n}a_{ij}\bar{R}_{i\nu j\nu;\nu}\\
& &-2\sum_{i,j,k=1}^{n}a_{ij}a_{ik}\bar{R}_{j\nu k\nu}-2\sum_{i,j,k,l=1}^{n}a_{ij}a_{lk}\bar{R}_{iljk},\nonumber
\end{eqnarray}
where the notation is the same as in Propostion \ref{prop1}.
\end{prop}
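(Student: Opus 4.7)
The strategy is the classical Simons-type calculation adapted to the drifted setting. The starting point is the Bochner-Simons identity
\begin{equation*}
\tfrac{1}{2}\Delta|A|^2 = |\nabla A|^2 + \sum_{i,j} a_{ij}\,\Delta a_{ij},
\end{equation*}
so the task reduces to computing $\sum_{i,j} a_{ij}\,\Delta a_{ij}$. First I would apply the Codazzi equation $a_{ij;k} - a_{ik;j} = -\bar R_{\nu ijk}$ twice and commute covariant derivatives using the intrinsic Ricci identity on $\Sigma$, to write $\Delta a_{ij} = a_{ij;kk}$ as $H_{;ij}$ plus a term of the form $\bar R_{i\nu j\nu;\nu}$ (from the derivative of Codazzi) plus quadratic contractions of $A$ with ambient curvature. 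The Gauss equation is then invoked to convert any intrinsic Riemann tensor produced by the commutator on $\Sigma$ into ambient $\bar R$-pieces and terms quartic in $A$, producing the $-|A|^4$ contribution and the contractions $\bar R_{j\nu k\nu}$ and $\bar R_{iljk}$ that appear on the right-hand side of the stated formula.

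Next, I would substitute for $\sum_{ij} a_{ij} H_{;ij}$ using Proposition \ref{prop1}. Because $\Sigma$ is $f$-minimal, that proposition expresses $\Delta_f H$ in terms of covariant derivatives of $\bar\nabla f$, the $f$-Ricci tensor and the term $|A|^2 H$. In particular the mixed trace $\sum_{i,j} a_{ij}(\bar\nabla^2 f)_{ij}$ is exactly what one needs to complete the conversion of $\Delta$ to $\Delta_f$ on the left-hand side, since
\begin{equation*}
\Delta_f |A|^2 = \Delta|A|^2 - 2\sum_{i,j} a_{ij}\langle\nabla f,\nabla a_{ij}\rangle,
\end{equation*}
and the drift piece on the right matches the $\bar\nabla f$-contractions generated when $H_{;ij}$ is eliminated through Proposition \ref{prop1}.

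Finally, I would invoke the hypothesis $\overline{\operatorname{Ric}}_f = C\bar g$, i.e.\ $\overline{\operatorname{Ric}} + \bar\nabla^2 f = C\bar g$. The $\overline{\operatorname{Ric}}_f(\nu,\nu)H\cdot H$ term inherited from Proposition \ref{prop1}, together with the tangential contractions $\sum a_{ij}\overline{\operatorname{Ric}}(e_i,e_j)$ and $\sum a_{ij}(\bar\nabla^2 f)(e_i,e_j)$ arising in Step 2, collapse into the single clean contribution $C|A|^2$. Collecting all the pieces gives the stated formula.

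The main obstacle is the bookkeeping in Step 1: there are several almost-identical ambient Riemann tensor contractions (depending on the placement of the normal index $\nu$ and on the order of differentiation) which are related only via the second Bianchi identity and the symmetries of $\bar R$. Organizing them so that the derivative term collapses exactly to $\bar R_{i\nu j\nu;\nu}$ with one normal derivative, rather than a different permutation, is the delicate point and is precisely where the $\overline{\operatorname{Ric}}_f = C\bar g$ assumption is needed to absorb the leftover ambient Ricci pieces into the single coefficient $C$.
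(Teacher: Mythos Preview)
The paper does not actually prove this proposition: it is quoted verbatim as Corollary~3 of \cite{CMZ2}, with no argument supplied here. So there is nothing to compare your sketch against in this paper; the computation lives in \cite{CMZ2}.

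That said, your outline has a genuine gap in Step~2. Proposition~\ref{prop1} tells you the value of $\Delta_f H = \operatorname{tr}_g(\nabla^2 H) - \langle\nabla f,\nabla H\rangle$, which is a \emph{trace} of the intrinsic Hessian of $H$. What the Simons computation produces after Codazzi and commutation is the contraction $\sum_{i,j} a_{ij} H_{;ij} = \langle A,\nabla^2 H\rangle$, a \emph{different} contraction of that Hessian. You cannot recover the latter from the former, so Proposition~\ref{prop1} cannot be ``substituted'' here in the way you describe. The correct route is to use the $f$-minimal equation $H=\langle\bar\nabla f,\nu\rangle$ directly: differentiating twice gives
\[
H_{;ij}=(\bar\nabla^3 f)_{i\nu j}+\text{(terms in }\bar\nabla^2 f,\,a_{ij},\,a_{ij;k}\text{)},
\]
and it is the piece $\sum_{k} a_{ik;j}f_k$ arising here that, via Codazzi, becomes $\tfrac12\langle\nabla f,\nabla|A|^2\rangle$ and converts $\Delta$ to $\Delta_f$. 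The condition $\overline{\operatorname{Ric}}_f=C\bar g$ then collapses the remaining $\bar\nabla^2 f$ and ambient Ricci pieces into $C|A|^2$, as you say. So your overall plan is the right one, but the mechanism in Step~2 needs to be the direct differentiation of the $f$-minimal identity, not an appeal to Proposition~\ref{prop1}.
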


Take $f=-\frac{|x|^2}{4}$ and $M=\mathbb{R}^{n+k}$ and note $H=\displaystyle\sum_{i=1}^na_{ii}$, $\displaystyle|A|^2=\sum_{i=1}^na_{ii} ^2$. It holds that
the Simons' type equations for self-expanders:
\begin{lemma} For self-expanders of codimension $1$,
\begin{align}\label{simons-H}
\mathcal{L}H+(|A|^2+\frac{1}{2})H=0.
\end{align}
and 
\begin{align}\label{simons-A}
\dfrac12\mathcal{L}|A|^2=|\nabla A|^2-\dfrac12|A|^2-|A|^4.
\end{align}
\end{lemma}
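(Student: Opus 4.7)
The plan is to view the lemma as a direct specialization of the general Simons-type identities in Propositions \ref{prop1} and \ref{simons-eq-grad}, with the ambient data $M=\mathbb{R}^{n+1}$ and $f=-|x|^2/4$. By Example \ref{example1} the self-expander equation is exactly the $f$-minimality condition for this choice. On $\Sigma$ the gradient $\nabla f=-\tfrac12 x^T$ is tangential, so $\Delta_f=\Delta-\langle\nabla f,\nabla\cdot\rangle=\Delta+\tfrac12\langle x,\nabla\cdot\rangle=\mathcal{L}$; thus the operator on the left-hand side of the cited formulas is precisely the drifted Laplacian of the lemma.

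Next I compute the ambient quantities entering those propositions. Since $|x|^2$ is quadratic on $\mathbb{R}^{n+1}$, direct differentiation gives $\overline{\nabla}^2 f=-\tfrac12\bar g$ and $\overline{\nabla}^3 f\equiv 0$. Flatness of $\mathbb{R}^{n+1}$ gives $\overline{\textrm{Ric}}\equiv 0$ and $\bar R\equiv 0$, so $\overline{\textrm{Ric}}_f=\overline{\textrm{Ric}}+\overline{\nabla}^2 f=-\tfrac12\bar g$. In particular the Einstein-type condition $\overline{\textrm{Ric}}_f=C\bar g$ required by Proposition \ref{simons-eq-grad} is satisfied with $C=-\tfrac12$, while every contribution involving $\bar R$ or $\bar\nabla\bar R$ in the right-hand side of that proposition vanishes identically.

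Substituting these data into Proposition \ref{prop1}, the two third-order terms $\sum_i(\overline{\nabla}^3 f)_{i\nu i}$ and $\sum_i(\overline{\nabla}^3 f)_{\nu i i}$ drop out, the Hessian cross term becomes
\[ 2\sum_{i,j=1}^{n}a_{ij}(\overline{\nabla}^2 f)_{ij}=-\sum_{i=1}^n a_{ii}=-H, \]
and $-\overline{\textrm{Ric}}_f(\nu,\nu)H=\tfrac12 H$. Combined with the remaining $-|A|^2 H$ term this yields $\mathcal{L}H=-\tfrac12 H-|A|^2 H$, which is (\ref{simons-H}). For the second identity, inserting the same data into Proposition \ref{simons-eq-grad} kills every ambient-curvature term on the right-hand side and leaves $|\nabla A|^2+C|A|^2-|A|^4$, which with $C=-\tfrac12$ is exactly (\ref{simons-A}). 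There is essentially no obstacle here: once the three identifications $\Delta_f=\mathcal{L}$, $\overline{\nabla}^2 f=-\tfrac12\bar g$, and $\overline{\textrm{Ric}}_f=-\tfrac12\bar g$ are recorded, both identities follow by direct substitution into the cited formulas, and the only detail requiring attention is the sign convention ${\bf H}=-H{\bf n}$ under which the self-expander equation matches the $f$-minimal form $H=\langle\overline{\nabla}f,{\bf n}\rangle$.
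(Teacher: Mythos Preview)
Your proof is correct and follows exactly the approach indicated in the paper: specialize Propositions \ref{prop1} and \ref{simons-eq-grad} to the data $M=\mathbb{R}^{n+1}$, $f=-|x|^2/4$, using $\overline{\nabla}^2 f=-\tfrac12\bar g$, $\overline{\nabla}^3 f=0$, $\bar R=0$, so that $\overline{\textrm{Ric}}_f=-\tfrac12\bar g$ and $\Delta_f=\mathcal{L}$. The paper merely states this specialization in one line, whereas you have written out the intermediate substitutions explicitly; the argument is the same.
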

  (See (\ref{simons-H}) also  in \cite{D}). Using (\ref{simons-H}),  we can give
\medskip

 {\it Proof of Theorem  \ref{th-3-hyper}.}  Similar to  the proof of Theorem \ref{th-2},  we have   (\ref{ine-lambda1-1}) gives 
 \begin{align}\label{ine-lambda1-1-1}
\lambda_1\geq \dfrac{n}{2}+\inf_{x\in \Sigma}H^2.
\end{align}

Suppose that  the equality $\displaystyle\lambda_1=\frac n2+\inf_{x\in \Sigma}H^2$ holds.   Again there exists the first eigenfunction $u>0$  such that $u\in W^{1,2}(\Sigma)$ and
 \begin{align}\label{3-eq-expan-1}\mathcal{L}u+\lambda_1u=0.
 \end{align}
 The proof of Theorem \ref{th-2} gives
\begin{align}\label{ine-lambda1-2}
\int_{\Sigma}(\dfrac{n}{2}+H^2)u^2e^{\frac{|x|^2}{4}}d\sigma
&\leq \int_{\Sigma}|\nabla u|^2e^{\frac{|x|^2}{4}}d\sigma=\lambda_1\int_{\Sigma}u^2e^{\frac{|x|^2}{4}}d\sigma.
\end{align}

This implies that $\displaystyle H^2=\inf_{x\in \Sigma}H^2=C.$  
The Simons' type equation (\ref{simons-H}) says that
\begin{align}
\mathcal{L}H+(|A|^2+\frac{1}{2})H=0.\nonumber
\end{align}

It implies that $ H=0$.  Hence $\Sigma$ must be $\mathbb{R}^n$ through the origin.

\qed

\section{Mean convex self-expanders of codimension $1$}

A self-expander hypersurface $\Sigma$ is called mean convex if its mean curvature $H\geq 0$. Besides the hyperplane $\mathbb{R}^n$ through the origin, there are nontrivial examples (see, e.g. \cite{EH}). In this section,
We will prove some rigidity results on mean convex self-expander hypersurfaces.

First we need  the following result proved byTasayco and the second author in   \cite{TZ}:

\begin{lemma} \label{lem1} (Lemma 1 in  \cite{TZ})
On a complete weighted manifold $\displaystyle \left(M,\left\langle ,\right\rangle,e^{-f}d\textsc{\footnotesize vol}\right)$, assume that the functions $u$, $v \in C^2\left(M\right)$, with $u > 0$ and $v\ge 0$ on $M$, satisfy
\begin{eqnarray}
\displaystyle {\Delta}_f u + q\left(x\right) u \le 0 \qquad \qquad \mbox{and} \qquad \qquad {\Delta}_f v + q\left(x\right) v \ge 0, \label{eq11} 
\end{eqnarray}
where $q\left(x\right) \in C^{0}\left(M\right)$. Suppose that there exists a positive  function $\kappa > 0$  on $[\delta, \infty)$ for some $\delta>0$, satisfying  $\frac{t}{\kappa(t)}$ is nonincreasing on $[\delta,\infty)$ and 
\begin{align}\int_{\delta}^{+\infty}\frac{t}{\kappa(t)}dt=+\infty,
\end{align}such that
\begin{equation}
\displaystyle \int_{B_R}v^2 e^{-f} \ \leq \kappa(R) \label{eq12}	
\end{equation} 
for all $R$ sufficiently large, where $B_R$ denotes the geodesic ball of radius $R$ of $\Sigma$. Then there exists a constant $C$ such that $\displaystyle v = Cu$.
\end{lemma}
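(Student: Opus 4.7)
}

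The strategy is the classical Sturm–Karp recipe: reduce the conclusion $v=Cu$ to showing that the quotient $w:=v/u$ has vanishing gradient, via a weighted Caccioppoli inequality combined with a cutoff $\phi_R$ designed so that its gradient is controlled in weighted $L^2(v^2)$ by $1/I(R)$, where $I(R):=\int_{R_0}^R t/\kappa(t)\,dt$ diverges by hypothesis. First I would form $w=v/u\ge 0$ and compute
\begin{equation*}
u^{2}\Delta_{f}w+2u\langle\nabla u,\nabla w\rangle = u\Delta_{f}v - v\Delta_{f}u.
\end{equation*}
The two differential inequalities combine multiplicatively: $u\Delta_fv \ge -quv$ (using $u>0$) and $-v\Delta_fu \ge quv$ (using $v\ge 0$), so the right–hand side is $\ge 0$. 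Thus $w$ satisfies the weighted divergence inequality $\operatorname{div}_{f}(u^{2}\nabla w)\ge 0$ classically on $M$.

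Second, for a compactly supported Lipschitz cutoff $\phi$, I would test this inequality against the non-negative function $\phi^{2}w$, integrate by parts in $L^{2}(e^{-f}d\mathrm{vol})$, and apply Young's inequality to obtain the weighted Caccioppoli bound
\begin{equation*}
\int_{M} u^{2}\phi^{2}|\nabla w|^{2}e^{-f}\,d\mathrm{vol} \;\le\; 4\int_{M} v^{2}|\nabla\phi|^{2}e^{-f}\,d\mathrm{vol}.
\end{equation*}
Set $g(t):=t/\kappa(t)$, non-increasing by hypothesis, and $I(R):=\int_{R_0}^{R}g(s)\,ds\uparrow\infty$. I would then define $\phi_{R}(x):=\Phi_{R}(d(x_0,x))$ with $\Phi_{R}\equiv 1$ on $[0,R_{0}]$, $\Phi_{R}(\rho):=I(R)^{-1}\int_{\rho}^{R}g(s)\,ds$ on $[R_{0},R]$, and $\Phi_{R}\equiv 0$ on $[R,\infty)$, so that $|\nabla\phi_{R}|\le g(\rho)/I(R)$ a.e.

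Third, the key estimate is that $\int_M v^{2}|\nabla\phi_{R}|^{2}e^{-f}\to 0$. Letting $V(\rho):=\int_{B_{\rho}}v^{2}e^{-f}\le\kappa(\rho)$, I would rewrite
\begin{equation*}
\int_{B_{R}\setminus B_{R_{0}}} v^{2}g(\rho)^{2}e^{-f} \;=\; \int_{R_{0}}^{R} g(\rho)^{2}\,dV(\rho),
\end{equation*}
and perform a Stieltjes integration by parts. Since $g^{2}$ is non-increasing and $V\le\kappa$, a short calculation using $\kappa g = \rho$ bounds this by $2R_{0}^{2}/\kappa(R_{0})+2I(R)$. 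Dividing by $I(R)^{2}$ yields
\begin{equation*}
\int_{M} v^{2}|\nabla\phi_{R}|^{2}e^{-f} \;\le\; \frac{2R_{0}^{2}}{I(R)^{2}\kappa(R_{0})}+\frac{2}{I(R)} \;\longrightarrow\; 0.
\end{equation*}
Plugging into the Caccioppoli bound and letting $R\to\infty$ via monotone convergence gives $\int_M u^{2}|\nabla w|^{2}e^{-f}=0$, whence $\nabla w\equiv 0$ (as $u>0$) and $w$ is constant on the connected manifold $M$, i.e.\ $v=Cu$. The main obstacle to make rigorous will be the radial integration by parts: $\kappa$ is assumed only continuous and the distance function is merely Lipschitz, so one must phrase the estimate as a Stieltjes integral against the monotone functions $g$ and $V$, or, equivalently, smooth $\phi_{R}$ by a standard approximation of $\rho$ before invoking the divergence theorem.
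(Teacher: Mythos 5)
Your argument is correct and is essentially the standard proof of this lemma; the paper itself does not reprove it but simply quotes it from \cite{TZ}, whose proof follows the same scheme you outline (pass to the quotient $w=v/u$, derive $\operatorname{div}_f(u^2\nabla w)\ge 0$, test against $\phi^2 w$ to get the Caccioppoli bound, and kill the right-hand side with a Karp-type cutoff built from $g=t/\kappa(t)$ using the divergence of $\int t/\kappa$). The only housekeeping points are the ones you already flag, plus choosing $R_0$ large enough that the hypothesis $V(\rho)\le\kappa(\rho)$ is actually in force on $[R_0,\infty)$.
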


Using Lemma \ref{lem1},  we can show Theorem  \ref{th-3}.

\medskip

 {\it Proof of Theorem  \ref{th-3}.} Since $H\geq 0$ and $\mathcal{L}H+(|A|^2+\dfrac12)H=0,$ by the argument using the Harnack inequality we have $H\equiv 0$ or $H>0$ on $\Sigma$. If $H\equiv 0$, it is hyperplane $\mathbb{R}^n$ through the origin. Assume  that $H>0$ on $\Sigma$.
It holds that
\begin{align}\label{simons-H-1}\mathcal{L}H+(|A|^2+\dfrac12)H=0.
\end{align}
We also have the Simons' type equation (\ref{simons-A}) for $|A|$:
\begin{align}
\dfrac12\mathcal{L}|A|^2=|\nabla A|^2-\dfrac12|A|^2-|A|^4.
\end{align}
This and $n|A|^2\geq H^2>0$ imply that
\begin{align}\mathcal{L}|A|=-\dfrac12|A|-|A|^3+\dfrac{|\nabla A|^2-|\nabla |A||^2}{|A|}.
\end{align}
Since $|\nabla A|^2-|\nabla |A||^2\geq 0,$ (see, e.g. Lemma 10.2 \cite{CM})
\begin{align}\label{simons-A-1}\mathcal{L}|A|+(|A|^2+\dfrac12)|A|=\dfrac{|\nabla A|^2-|\nabla |A||^2}{|A|}\geq 0.
\end{align}
By (\ref{simons-H-1}), (\ref{simons-A-1}), and the hypothesis of the theorem,  using Lemma \ref{lem1}, it holds that   $H=C|A|$ on $\Sigma$ and 
\begin{align}\label{eq-A}\mathcal{L}|A|+(|A|^2+\dfrac12)|A|=0.
\end{align}
Hence  $|\nabla A|=|\nabla|A||$.

The rest of the proof is similar to the one of Huisken (\cite{H}. See, e.g. the proof of Theorem 0.17 in \cite{CM}). We only focus on the different points of the argument. Following the proof in \cite{CM},  $|\nabla A|=|\nabla|A||$  implies two possible cases: (I) If the rank of $A$ is greater than $2$, it implies that  $\nabla A\equiv 0$ on $\Sigma$. Thus $|A|$ is constant on $\Sigma$.   $H=C|A|>0$ says $|A|$ is a positive constant  which induces a contradiction with (\ref{eq-A}).  (II) If the rank of $A$ is $1$, then  $\Sigma$ is the product of a curve $\gamma(t)\subset \mathbb{R}^2$ and an $(n-1)$-dimensional hyperplane. If $\gamma(t)$ is a line, it contradicts with $H>0$.  If  $\gamma(t)$ is not a line,  it will contradicts the condition  (\ref{eq-A-In}),  
since the part of the weight  $e^{\frac{|x|^2}{4}}$ restricted on $\mathbb{R}^{n-1}$ has the growth order  bigger than the one of any polynomial.

\qed

Theorem   \ref{th-3} implies Corollary \ref{cor-mean} and the following

\begin{cor}\label{4-th-1} If a complete $n$-dimensional, $n\geq 2$, immersed self-expander hypersurface $\Sigma$ is mean convex, and $\int_{\Sigma}|A|^2e^{\frac{|x|^2}{4}}d\sigma<\infty$, then $\Sigma$ must be a hyperplane $\mathbb{R}^n$ through the origin.
\end{cor}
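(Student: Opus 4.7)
The plan is to observe that Corollary \ref{4-th-1} is a direct specialization of Corollary \ref{cor-mean}. Set
\begin{equation*}
M := \int_{\Sigma} |A|^2 e^{|x|^2/4}\, d\sigma,
\end{equation*}
which is finite by hypothesis. Since $B_R \subset \Sigma$, we have $\int_{B_R} |A|^2 e^{|x|^2/4}\, d\sigma \leq M$ for every $R > 0$. For all $R \geq 1$ the trivial bound $M \leq M R^2$ then yields
\begin{equation*}
\int_{B_R} |A|^2 e^{|x|^2/4}\, d\sigma \leq M R^2,
\end{equation*}
so the growth hypothesis (\ref{eq-mean-1}) of Corollary \ref{cor-mean} is satisfied with constant $C = M$. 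Applying Corollary \ref{cor-mean} immediately forces $\Sigma$ to be a hyperplane $\mathbb{R}^n$ through the origin.

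There is no genuine obstacle here: the corollary is essentially a restatement of Corollary \ref{cor-mean} in the special case of a finite total weighted integral of $|A|^2$, and once $R \geq 1$ any finite quantity is dominated by $CR^2$. Equivalently, one could invoke Theorem \ref{th-3} directly with a growth function such as $h(t) = t^2$, since the finite integral hypothesis makes the dominating inequality (\ref{eq-A-In}) immediate while the monotonicity/integrability requirements on $t/h(t)$ are verified by inspection. All the real work has already been done inside Theorem \ref{th-3}, which combined the Simons-type identities (\ref{simons-H}) and (\ref{simons-A}) with the comparison Lemma \ref{lem1} to rule out every mean convex self-expander other than the flat hyperplane through the origin under a mild weighted growth bound on $|A|^2$.
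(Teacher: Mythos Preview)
Your proposal is correct and matches the paper's approach: the paper gives no separate proof but simply declares that Corollary~\ref{4-th-1} follows from Theorem~\ref{th-3}, and your reduction via Corollary~\ref{cor-mean} (i.e., Theorem~\ref{th-3} with $h(t)=t^2$) is exactly that observation. The only technicality worth noting is that Corollary~\ref{cor-mean} assumes \emph{proper} immersion while the stated hypothesis here says only ``immersed,'' but this discrepancy is already present in the paper's own deduction rather than a flaw in your argument.
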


\section{The bottom of spectrum of stability operator of self-expanders}

In this section, we study the bottom $\mu_1$ of the weighted stability operator $ L=\mathcal{L}+|A|^2-\frac12=\Delta+\frac12\left<x, \nabla\cdot\right>+|A|^2-\frac12$ of a self-expander hypersurface. 

\medskip
{\it Proof of Theorem \ref{th-stab}.} Note $\mu_1$ may take $-\infty$ and in this case (\ref{stab-eq-1}) holds. Assume that $\mu_1>-\infty.$ Then the spectral theory  (see, e.g. Lemma 9.25, \cite{CM})  implies that there is a positive function $w$ such that
\[ Lw+\mu_1w=0,
\]
that is,
\[\Delta w+\frac12\left<x, \nabla w\right>+|A|^2w-\frac12 w+\mu_1w=0.
\]
Then
\begin{align}
\mathcal{L}\log w&=\dfrac{\Delta w+\dfrac12\left<x, \nabla w\right>}{w}-\dfrac{|\nabla w|^2}{w^2}\nonumber\\
&=-|A|^2+\dfrac12-\mu_1-|\nabla \log w|^2
\end{align}
Take $\phi\in \mathit{C}_0^{\infty}(\Sigma)$.
\begin{align}
\int_{\Sigma}\phi^2(\mathcal{L}\log w)e^{\frac{|x|^2}{4}}d\sigma&= \int_{\Sigma}\left(-|A|^2+(\dfrac12-\mu_1)-|\nabla \log w|^2\right)\phi^2e^{\frac{|x|^2}{4}}d\sigma\nonumber\\
\end{align}
\begin{align}\label{stab-eq-10}
\int_{\Sigma}\left(|A|^2+|\nabla \log w|^2\right)\phi^2e^{\frac{|x|^2}{4}}d\sigma&=\int_{\Sigma}(\dfrac12-\mu_1)\phi^2e^{\frac{|x|^2}{4}}d\sigma-\int_{\Sigma}\phi^2(\mathcal{L}\log w)e^{\frac{|x|^2}{4}}d\sigma\nonumber\\
&=\int_{\Sigma}(\dfrac12-\mu_1)\phi^2e^{\frac{|x|^2}{4}}d\sigma+\int_{\Sigma}\left<\nabla \phi^2, \nabla \log w\right>e^{\frac{|x|^2}{4}}d\sigma
\end{align}
By
\[\int_{\Sigma}\left<\nabla \phi^2, \nabla \log w\right>e^{\frac{|x|^2}{4}}d\sigma\leq \int_{\Sigma}|\nabla \phi|^2e^{\frac{|x|^2}{4}}d\sigma
+\int_{\Sigma}|\nabla \log w|^2\phi^2e^{\frac{|x|^2}{4}}d\sigma,
\]
(\ref{stab-eq-10}) implies that
\begin{align}\label{stab-eq-11}
\int_{\Sigma}|A|^2\phi^2e^{\frac{|x|^2}{4}}d\sigma
&\leq\int_{\Sigma}(\dfrac12-\mu_1)\phi^2e^{\frac{|x|^2}{4}}d\sigma+\int_{\Sigma}|\nabla \phi|^2e^{\frac{|x|^2}{4}}d\sigma.
\end{align}
Since the spectrum of $\mathcal{L}$ is discrete, there is the first eigenfunction $u>0$ associated to $\lambda_1$ for the operator $\mathcal{L}$ such that $u\in W^{1,2}(\Sigma)$ and 
\[\mathcal{L}u+\lambda_1 u=0.
\]
Choose $\phi=\varphi_j u$, where $\varphi_j$ are the non-negative cut-off functions   satisfying  that  $\varphi_j$ is $1$ on $B_j$,  $|\nabla\varphi|\leq 1$ on $B_{j+1}\setminus B_j$, and $\varphi=0$ on $\Sigma\setminus B_{j+1}$.  Substitute $\phi$ in (\ref{stab-eq-11}):
\begin{align}\label{stab-eq-12}
\int_{\Sigma}|A|^2\varphi_i^2u^2e^{\frac{|x|^2}{4}}d\sigma
&\leq\int_{\Sigma}(\dfrac12-\mu_1)\varphi_j^2u^2e^{\frac{|x|^2}{4}}d\sigma+\int_{\Sigma}|\nabla (\varphi_ju)|^2e^{\frac{|x|^2}{4}}d\sigma.
\end{align}
Similar to the proof of Theorem \ref{th-3},  letting $j\rightarrow \infty$ in  (\ref{stab-eq-12}),  the monotone convergence theorem implies that 
\begin{align}\label{stab-eq-15}
\int_{\Sigma}|A|^2u^2e^{\frac{|x|^2}{4}}d\sigma
&\leq\int_{\Sigma}(\dfrac12-\mu_1)u^2e^{\frac{|x|^2}{4}}d\sigma+\int_{\Sigma}|\nabla u|^2e^{\frac{|x|^2}{4}}d\sigma\nonumber\\
&=\int_{\Sigma}(\dfrac12-\mu_1+\lambda_1)u^2e^{\frac{|x|^2}{4}}d\sigma.
\end{align}
Hence 
\[\dfrac12-\mu_1+\lambda_1\geq 0,
\]
that is, (\ref{stab-eq-1}) holds:
\[   \mu_1\leq \lambda_1+\dfrac12.
\]
If  $\dfrac12+\lambda_1-\mu_1=0$, then $|A|\equiv 0$ on $\Sigma$. Thus $H\equiv 0$, and $\Sigma$ is the hyperplane through the origin. Reciprocally for the $\mathbb{R}^n$ through the origin, $L=\mathcal{L}-\frac12$. Hence
\[\mu_1=\lambda_1+\frac12=\frac{n+1}{2}.
\]

The rest is to  prove  $\mu_1\geq 1$ in (\ref{stab-eq-2}) in the case that $\Sigma$ is mean convex. It is known that  $H\equiv 0$ or $H>0$. If $H\equiv 0$,  the conclusion obviously holds.
 If $H>0$,
 the Simons' type equation (\ref{simons-H}) states $H$ is a positive solution of 
\begin{align}\label{simons-H-2}
LH+H=\mathcal{L}H+(|A|^2+\frac{1}{2})H=0.
\end{align}
This fact implies   implies that $\mu_1\geq \frac12$ (see, e.g., \cite{pw}, \cite{FS}).

\begin{bibdiv}
\begin{biblist}

\bib{AIC}{article}{
   author={Angenent, S.},
   author={Ilmanen, T.},
   author={Chopp, D. L.},
   title={A computed example of nonuniqueness of mean curvature flow in $\mathbb{R}^3$},
   journal={Comm. Partial Differential Equations},
   volume={20},
   date={1995},
   number={},
   pages={1937-1958},
   issn={},
   review={},
   doi={},
}

\bib{CMZ2}{article}{
   author={Cheng, Xu},
   author={Mejia, Tito},
   author={Zhou, Detang},
   title={Simons-type equation for $f$-minimal hypersurfaces and
   applications},
   journal={J. Geom. Anal.},
   volume={25},
   date={2015},
   number={4},
   pages={2667--2686},
   issn={1050-6926},
   review={\MR{3427142}},
}
		
\bib{CMZ3}{article}{
   author={Cheng, Xu},
   author={Mejia, Tito},
   author={Zhou, Detang},
   title={Stability and compactness for complete $f$-minimal surfaces},
   journal={Trans. Amer. Math. Soc.},
   volume={367},
   date={2015},
   number={6},
   pages={4041--4059},
   issn={0002-9947},
   review={\MR{3324919}},
   doi={10.1090/S0002-9947-2015-06207-2},
}
		
\bib{CMZ}{article}{
   author={Cheng, Xu},
   author={Mejia, Tito},
   author={Zhou, Detang},
   title={Eigenvalue estimate and compactness for closed $f$-minimal
   surfaces},
   journal={Pacific J. Math.},
   volume={271},
   date={2014},
   number={2},
   pages={347--367},
   issn={0030-8730},
   review={\MR{3267533}},
   doi={10.2140/pjm.2014.271.347},
}
\bib{CZ13-2}{article}{
   author={Cheng, Xu},
   author={Zhou, Detang},
   title={Stability properties and gap theorem for complete f-minimal
   hypersurfaces},
   journal={Bull. Braz. Math. Soc. (N.S.)},
   volume={46},
   date={2015},
   number={2},
   pages={251--274},
   issn={1678-7544},
   review={\MR{3448944}},
   doi={10.1007/s00574-015-0092-z},
}

\bib{CZ}{article}{
   author={Cheng, Xu},
   author={Zhou, Detang},
   title={Volume estimate about shrinkers},
   journal={Proc. Amer. Math. Soc.},
   volume={141},
   date={2013},
   number={2},
   pages={687--696},
   issn={0002-9939},
   review={\MR{2996973}},
   doi={10.1090/S0002-9939-2012-11922-7},
}

\bib{CZ2017}{article}{
 author={Cheng, Xu},
   author={Zhou, Detang},
   title={Eigenvalues of the drifted Laplacian
on complete metric measure spaces},
   journal={Communications
in Contemporary Mathematics},
   volume={},
   date={},
   number={},
   pages={},
   issn={},
   review={},
   doi={10.1142/S0219199716500012},
}

\bib{CM}{article}{
   author={Colding, Tobias H.},
   author={Minicozzi, William P., II},
   title={Generic mean curvature flow I: generic singularities},
   journal={Ann. of Math. (2)},
   volume={175},
   date={2012},
   number={2},
   pages={755--833},
   issn={0003-486X},
   review={\MR{2993752}},
   doi={10.4007/annals.2012.175.2.7},
}
\bib{CM1}{article}{
   author={Colding, Tobias H.},
   author={Minicozzi, William P., II},
   title={Smooth Compactness of self-shrinkers},
   pages={463--475},
   journal={Comment. Math. Helv.},
   volume={87},
   number={},
   date={2012},
   issn={},
   review={},
   doi={10.4171/CMH/260},
}

\bib{D}{article}{
   author={Ding, Qi},
 title={Minimal cones and self-expanding solutions for mean curvature flows},
   journal={arXiv:1503.02612 [math.DG]  9 Mar 2015},
   volume={ },
   date={},
   number={},
   pages={},
   issn={},
   review={},
   doi={},
}

\bib{DX}{article}{
   author={Ding, Qi},
   author={Xin, Y.L.},
   title={Volume growth eigenvalue and compactness for self-shrinkers},
   journal={Asian J. Math.},
   volume={17},
   date={2013},
   number={3},
   pages={443-456},
   issn={},
   review={},
   doi={},
}

\bib{EH}{article}{
   author={Ecker, Klaus},
   author={Huisken, Gerhard},
   title={Mean curvature evolution of entire graphs},
   journal={Ann. of Math.   2nd Ser.},
   volume={130},
   date={1989},
   number={3},
   pages={453-471},
   issn={},
   review={},
   doi={},
}

\bib{FM}{article}{
   author={Fong, Frederick Tsz-Ho},
   author={McGrath, Peter},
 title={Rotational Symmetry of Asymptotically Conical Mean Curvature Flow Self-Expanders},
   journal={arXiv:1609.02105v1  [math.DG]  7 Sep 2016},
   volume={ },
   date={},
   number={},
   pages={},
   issn={},
   review={},
   doi={},
}

\bib{FS}{article}{
   author={Fischer-Colbrie, Doris},
	 author={Schoen, Richard},
   title={The structure of complete stable surfaces in $3$-manifolds of nonnegative scalar curvature},
   journal={Comm. Pure Appl. Math.},
   volume={33},
   date={1980},
   number={2},
   pages={199--211},
   issn={0010-3640},
}

\bib{G}{book}{
   author={Alexander Grigoryan },
   title={Heat Kernel and Analysis on Manifolds	},
   language={English},
   publisher={American Mathematical Soc.},
   date={2009},
   pages={},
   review={},
}

\bib{H}{article}{
   author={Huisken, Gerhard},
   title={Local and global behaviour of hypersurfaces moving by mean curvature. Differential geometry: partial differential equations on manifolds (Los Angeles, CA, 1990),},
   journal={Proc. Sympos. Pure Math., 54, Part 1, Amer. Math. Soc., Providence, RI,},
   volume={},
   date={1993},
   number={},
   pages={175-191},
   issn={},
   review={},
   doi={},
}

\bib{I}{article}{
   author={Ilmanen,T},
   title={Lectures on Mean Curvature Flow and Related Equations (Trieste Notes),},
   journal={},
   volume={},
   date={1995},
   number={},
   pages={},
   issn={},
   review={},
   doi={},
}

\bib{pw}{article}{
   author={Li, Peter},
   author={Wang, Jiaping},
   title={Weighted Poincar\'e inequality and rigidity of complete manifolds},
   language={English, with English and French summaries},
   journal={Ann. Sci. \'Ecole Norm. Sup. (4)},
   volume={39},
   date={2006},
   number={6},
   pages={921--982},
   issn={0012-9593},
   review={\MR{2316978}},
   doi={10.1016/j.ansens.2006.11.001},
}

\bib{RS}{book}{
   author={Reed, Michael},
   author={Simon, Barry},
   title={Methods of modern mathematical physics. IV. Analysis of operators},
   publisher={Academic Press [Harcourt Brace Jovanovich, Publishers], New
   York-London},
   date={1978},
   pages={xv+396},
   isbn={0-12-585004-2},
   review={\MR{0493421}},
}

\bib{S}{article}{
   author={Stavrou, Nikolaos},
   title={Selfsimilar solutions to the mean curvature flow},
   journal={J. reine angew. Math.},
   volume={499},
   date={1998},
   number={},
   pages={189--198},
   issn={},
   review={},
   doi={},
}

\bib{TZ}{article}{
   author={Tasayco, Ditter},
   author={Zhou, Detang},
   title={Uniqueness of grim hyperplanes for mean curvature flows},
   journal={Arch. Math. (Basel)},
   volume={109},
   date={2017},
   number={2},
   pages={191--200},
   issn={0003-889X},
   review={\MR{3673637}},
   doi={10.1007/s00013-017-1057-9},
}

\end{biblist}
\end{bibdiv}

\end{document}